\numberwithin{equation}{section}
\newcommand{\ra}{\rightarrow}
\newcommand{\half}{\frac{1}{2}}
\newcommand{\e}{\epsilon}
\newcommand{\R}{{\mathbb R}}
\newcommand{\V}{{\mathbb V}}
\newcommand{\K}{{\mathbb K}}
\newcommand{\M}{{\mathbb M}}
\newcommand{\X}{{\mathbb X}}
\newcommand{\grad}{\operatorname{grad}}
\renewcommand{\div}{\operatorname{div}}
\newcommand{\skw}{\operatorname{skw}}
\newcommand{\tr}{\operatorname{tr}}
\newcommand{\lap}{\Delta}
\newcommand{\pd}{\partial}
\newcommand{\es}{e_\sigma}
\newtheorem{thm}{Theorem}[section]
\newtheorem{lemma}[thm]{Lemma}
\theoremstyle{definition}
\newtheorem{eg}[thm]{Example}
\theoremstyle{remark}
\begin{document}

\title{Mixed methods for elastodynamics with weak symmetry}
\author{Douglas N. Arnold\\
Jeonghun J. Lee}
 \thanks{The first author was supported by NSF grant
DMS-1115291.  The authors are grateful for computational resources from the University
of Minnestoa Supercomputing Institute used in this research.} 
\begin{abstract}
We analyze the application to
elastodynamic problems of mixed finite element methods for elasticity with weak symmetry.  Our approach
leads to a semidiscrete method which consists of a system of ordinary differential equations without
algebraic constraints.  Our error analysis, which is based on a new elliptic projection operator,
applies to several mixed finite element spaces developed for elastostatics. The error estimates
we obtain are robust for nearly incompressible materials.
\end{abstract}

\keywords{mixed finite element, elastodynamics, weak symmetry}

\subjclass[2010]{Primary 65N30; Secondary 74H15, 74S05}

\maketitle

\section{Introduction} 
The linear elastodynamics equation describes wave propagation in an elastic medium. It has the form  
\begin{align}
\label{ed-eq1} 
\rho \ddot u - \div C \epsilon(u) =f \quad \text{in $\Omega$},
\end{align}
where $u : \Omega \ra \R^n$ is the unknown displacement vector field, $\epsilon(u)$ the corresponding linearized strain tensor, $C$ the stiffness tensor of the elastic medium, $\rho$ the mass density, and $f$ an external body force. In \eqref{ed-eq1} we have suppressed the dependence on the independent variables for simplicity, but all the quantities appearing in the
equation may depend on $x\in\Omega$, and $t\in[0,T_0]$ (for some positive $T_0$), and the equation is
supposed to hold for all such $t$. Combining equation \eqref{ed-eq1}
with initial conditions $u=u_0$, $\dot u=v_0$ at time $t=0$ and with appropriate boundary conditions,
we obtain a well-posed problem (see e.g., \cite{Duvaut-Lions-book}, Theorem 4.1, or section~\ref{section:weak-form} below). 

Mixed finite element methods, in which the stress $\sigma=C\epsilon(u)$ and displacement $u$ are approximated independently, are
popular for the numerical approximation of elastostatic problems.  The application of mixed methods to elastodynamic problems
has been studied by various researchers as well. In \cite{DougGup86}, Douglas and Gupta studied plane
linear elastodynamics using the mixed finite elements developed in \cite{ADG84}. In \cite{Mak92}, Makridakis analyzed
mixed finite elements for elastodynamics in both two and three dimensions, including
higher order time discretization, using the elements of \cite{ADG84, Johnson_Mercier, Sten88}. In \cite{BJT02}, B\'{e}cache, Joly, and Tsogka developed a new family of rectangular mixed finite elements and studied the a priori error analysis.

These mixed finite element spaces incorporate the symmetry of the stress tensor into the finite element
space, and as a result are rather complicated.  This has led to a great deal of interest in mixed finite element for elasticity in which the symmetry of the stress is imposed only weakly.  This idea was first suggested in \cite{Fraeijs}
and elements based on it were first developed in \cite{AmaraThomas} and \cite{ABD84}.
Recently a great deal of progress has been made in stable mixed finite elements for elasticity with weak symmetry.
In this paper, we study the application of such elements to linear elastodynamics.  In particular,
we treat in a unified fashion the elements of Arnold, Falk, and Winther \cite{AFW07} and the variant introduced by
Cockburn, Gopalakrishnan, and Guzm\'an \cite{CGG10}, as well as another method of Gopalakrishnan and Guzm\'an
\cite{GG10} and a related older method of Stenberg \cite{Sten88}.  Although we only consider the case of
elastodynamics, we point out that one advantage of mixed finite elements is that they can be easily
extended to materials with more complex constitutive equations, such as viscoelasticity (see \cite{LeeThesis}
and \cite{RogWin10}, where the quasistatic problem is considered), and likely also to plasticity and poroelasticity.

Since symmetry of the stress tensor is an algebraic condition, the most obvious formulation
of elastodynamics with weak symmetry leads, after spatial discretization, to a system of
differential--algebraic equations.  Indeed, that is the approach taken in \cite{RogWin10} for quasistatic
viscoelasticity.  However, in this paper we propose a different mixed variational formulation
for elastodynamics with weak symmetry (see \eqref{ed-weaksym1}--\eqref{ed-weaksym3}).  Our approach leads simply to a system of 
ordinary differential equations in time after spatial discretization.  Therefore, standard time stepping methods
can be applied, and the analysis of the temporal discretization is standard.  For that reason we focus on
the spatial discretization in this paper.

The remainder of the paper is organized as follows. In section~\ref{section:notations}, we set out notations and
 describe the features of mixed finite elements for elasticity with weak symmetry of stress which we will need
for analysis of elastodynamic problems. In section~\ref{section:weak-form}, we prove well-posedness of linear elastodynamics using the Hille--Yosida theorem and derive the weak formulation of it which we will use
for discretization. In section~\ref{section:semidiscrete}, we analyze the semidiscretization,
and obtain a priori error estimates for the elements of \cite{AFW07} and \cite{CGG10}. In this context,
we also prove that numerical solution is free from locking
in the nearly incompressible regime, i.e., the constants in the error bounds do not grow unboundedly as the Lam\'e coefficient $\lambda$ tends to infinity. In section~\ref{section:imp-err-analysis}, we give an improved error analysis for the elements of \cite{GG10} and \cite{Sten88}. Finally, we present numerical results supporting the analysis in the final section.

\section{Notations and preliminaries} \label{section:notations}
\subsection{Notations}
Let $\Omega$ be a bounded smooth domain in $\R^n$ with $n=2$ or $3$.
We use $\V$ to denote the space $\R^n$ of $n$-vectors and $\M$, $\mathbb{S}$, and $\K$ to denote
the space of all, symmetric, and skew-symmetric $n\times n$ matrices, respectively.
The components of a vector field $u : \Omega \ra \mathbb{V}$ and a matrix field $\sigma : \Omega \ra \mathbb{M}$
are denoted by $u_i$ and $\sigma_{ij}$, respectively. The $L^2$ inner products on vector and matrix fields
are given by
\begin{gather*}
( v, w ) = \int_{\Omega} v \cdot w \,dx = \int_{\Omega} \sum_{i=1}^n v_i w_i \,dx , \quad v, w : \Omega \rightarrow \mathbb{V},
\\
( \sigma , \tau) = \int_{\Omega} \sigma : \tau \,dx= \int_{\Omega} \sum_{1 \leq i,j \leq n} \sigma_{ij} \tau_{ij} \,dx,
\quad\sigma, \tau : \Omega \rightarrow \mathbb{M}.
\end{gather*}
We denote the corresponding norms by $\| \sigma \|$, $\| u \|$ and the corresponding Hilbert spaces by
$L^2(\Omega; \mathbb{M})$, $L^2(\Omega; \mathbb{V})$.
For $\sigma : \Omega \rightarrow \mathbb{M}$ and $u : \Omega \rightarrow \mathbb{V}$, $\div \sigma$ and $\grad u$ are defined as the row-wise divergence and the row-wise gradient
\begin{align*}
(\div \sigma)_i = \sum_j \pd_j \sigma_{ij}, \qquad (\grad u)_{ij} = \pd_j u_i, 
\end{align*}
respectively, where $\pd_j$ denotes the $j$th partial derivative, applied in the sense of distributions. For $\sigma : \Omega \ra \M$, the skew-symmetric part of $\sigma$ is
$\skw \sigma = (\sigma -\sigma^{T})/2$.

We use standard notation for the Sobolev space $H^m (\Omega)$, $m \geq 0$,  with norm $\| \cdot \|_m$, and the space $\mathring{H}^1(\Omega)$ of $H^1(\Omega)$ functions with vanishing trace. For $\X=\V, \M, \K$, or $\mathbb{S}$,
we write $H^m(\Omega; \X)$ for the space of $\X$-valued fields such that each component belongs to $H^m(\Omega)$. If $\X$ is clear in context, we may write $H^m (\Omega)$ instead $H^m (\Omega; \X)$.  
For $\X$ a subspace of $\M$,  let
\begin{align*}
H(\div, \Omega; \X) = \{ \sigma \in L^2 (\Omega; \X ) \; | \; \div\sigma\in L^2(\Omega;\V)\; \}, 
\end{align*}
which is a Hilbert space with the norm $\| \sigma \|_{\div}^2 = \| \sigma \|^2 + \| \div \sigma \|^2$.  We abbreviate
\begin{gather} \label{msvk}
\begin{split}
M =  H(\div, \Omega; \mathbb{M}), \quad S =  H(\div, \Omega; \mathbb{S}), \quad V = L^2(\Omega; \mathbb{V}), \quad K = L^2(\Omega; \mathbb{K}).
\end{split}
\end{gather}

Let $\mathcal{X}$ be a Banach space (e.g., one of the Hilbert spaces defined above), $T_0$ a positive real number,
$m$ a nonnegative integer, and $1\le p\le \infty$.
We denote by $L^p([0,T_0];\mathcal X)$ or $L^p\mathcal X$ the space of functions $f:[0,T_0]\to \mathcal X$ for which
\begin{align*}
\| f \|_{L^p\mathcal{X}}^p := 
\int_0^{T_0}\| f \|_\mathcal{X}^p \,dt  <\infty,
\end{align*}
(with the usual modification for $p=\infty$), and by $W^{m,p}([0,T_0];\mathcal X)$ or $W^{m,p}\mathcal X$
the space for which
$$
\| u \|_{W^{m,p} \mathcal{X}}^p := \sum_{l=0}^m \| \pd^l u / \pd t^l \|_{L^p \mathcal{X}}^p<\infty.
$$
We shall also use the space $C^m([0,T_0];\mathcal X)$ of $m$-times continuously differentiable functions.

For brevity of notation,
we write $\| f, g \|_\mathcal{X}$ to denote $\| f \|_\mathcal{X} + \| g \|_\mathcal{X}$ when $f$ and $g$
both belong to some Banach space $\mathcal{X}$, and, as we have seen, we use $\dot{f}$, $\ddot f$, \dots, 
to denote $\pd f / \pd t$, ${\pd^2 f}/ { \pd t^2}$, etc.

\subsection{Mixed formulations of linear elastostatics}  \label{section:stationary}
In this section we review the discretization of \emph{stationary} linear elasticity using mixed finite elements with weak symmetry.  For details, see \cite{AFW07}.
The constitutive equation of linear elasticity is $\sigma=C\epsilon(u)$, where, for a given
displacement vector field $u$, the linearized strain tensor $\epsilon(u)$ is given by 
$\e(u) = (\grad u + (\grad u)^T)/2$,
and at each point $x$ the  elasticity tensor
$C(x)$ is a symmetric positive definite linear operator from $\mathbb{S}$ to $\mathbb{S}$,  bounded above and below.
The same then holds for the compliance tensor $A:=C^{-1}$. For a homogeneous isotropic
elastic material
\begin{align}
\label{compliance} C \tau = 2\mu \tau + \lambda \tr(\tau)I,\quad
A\tau = \frac{1}{2\mu}\left(\tau-\frac{\lambda}{2\mu+ n\lambda}\tr(\tau)I\right),
\end{align}
where $\mu$, $\lambda$ are positive scalars called the Lam\'{e} coefficients, and $\tr(\tau)$ is the trace of $\tau$. 

The boundary value problem of linear elastostatics consists of the constitutive equation,
the equilibrium equation $-\div\sigma=f$, where $f$ is a given body force density, and boundary conditions.
For simplicity, we only consider problems with the homogeneous displacement boundary conditions,
although it is not difficult to extend our approach to more general boundary conditions. Thus the elastostatic problem is
\begin{gather*}
A\sigma = \e(u), \quad -\div \sigma = f  \quad \text{in } \Omega, \qquad u = 0  \quad \text{on } \pd \Omega.
\end{gather*}
Integrating by parts, we obtain a weak formulation of linear elasticity with strongly imposed symmetry.
It seeks $(\sigma, u)$ in $S \times V$ so that
\begin{align}
\label{es-HR1} (A \sigma, \tau) + (\div \tau, u) &= 0, & & \tau \in S, \\
\label{es-HR2} - (\div \sigma, w) &= (f, w) , & & w \in V.
\end{align}
For any $f\in L^2(\Omega;\V)$ this system admits a unique solution.

We now modify this formulation to impose the symmetry of stress weakly.
For this, we extend the operator $A$, originally defined only on symmetric tensors, to map $\M\to\M$,
by setting it equal to the identity map on skew-symmetric tensors (or a positive multiple
of the identity map). Next, we introduce the
rotation field, $r := \skw \grad u$. 
Then the triple $(\sigma, u, r)$ in $M \times V \times K$ satisfies
\begin{align*}
(A \sigma, \tau) = (\grad u - r, \tau) = -(u, \div \tau) - (r, \tau), \quad \tau \in M.
\end{align*}
Now we seek $(\sigma, u, r)$ in $M \times V \times K$ satisfying
\begin{align}
\label{es-weaksym-HR1}  (A \sigma, \tau) + (\div \tau, u) + (r, \tau) &= 0 , & & \tau \in M, \\
\label{es-weaksym-HR2} - (\div \sigma, w)  &= ( f, w ), & & w \in V , \\
\label{es-weaksym-HR3} (\sigma, q) &= 0, & & q \in K,
\end{align}
where the third equation expresses the symmetry of the stress.
The formulation (\ref{es-weaksym-HR1}--\ref{es-weaksym-HR3}) admits a unique solution,
for which $(\sigma,u)$ coincides with the solution of (\ref{es-HR1}--\ref{es-HR2}) and $r=\skw\grad u$.
The triple $(\sigma,u,r)$ may be characterized variationally as the unique critical point of the
functional
$$
(\sigma,u,r)\mapsto \frac12 (A\sigma,\sigma) + (\div\sigma,u)+(\sigma,r) +(f,u)
$$
over $M\times V\times K$.

\subsection{Mixed finite elements for elastostatics with weak symmetry}\label{section:mfem}
A mixed method for elastostatics with weak symmetry is a Galerkin method based on this weak formulation.
Thus we make a choice of finite element subspaces $M_h \subset M$, $V_h \subset V$, $K_h \subset K$
and seek $(\sigma_h, u_h, r_h)$ in $M_h \times V_h \times K_h$ so that
\begin{align}
\label{es-weaksym-HR1-discrete}  (A \sigma_h, \tau) + (\div \tau, u_h) + (r_h, \tau) &= 0 , & & \tau \in M_h, \\
\label{es-weaksym-HR2-discrete} - (\div \sigma_h, w)  &= ( f, w ), & & w \in V_h , \\
\label{es-weaksym-HR3-discrete} (\sigma_h, q) &= 0, & & q \in K_h.
\end{align}
Of course the spaces must be suitably chosen to insure that this finite dimensional problem is
nonsingular, and to obtain error estimates.  In this paper, we shall consider four families
of such spaces, each based on a simplicial triangulation of $\Omega$ into elements,
and a choice of polynomial degree $k>0$.  The simplest choice is the AFW family defined in
\cite{AFW07}.  For any $k\ge 1$, the spaces $V_h$ and $K_h$ are simply defined as the
fields in $V$ and $K$ which are piecewise polynomial of degree at most $k-1$ on each element.  That
is, the shape function space for each component is $\mathcal P_{k-1}$, with
no interelement continuity imposed.  The stress space $M_h$ consists of all matrix fields
in $M$ which belong piecewise to $\mathcal P_k$.  For these elements, all three
variables, $\sigma$, $u$, and $r$, are approximated with an error $O(h^k)$ in $L^2$.  This
is clearly the best permitted by the subspaces for $u$ and $r$, but the inclusion of $\mathcal P_k$
in the shape functions for $\sigma$ suggests the possibility of $O(h^{k+1})$ for $\sigma$.  This however
does not hold.  This observation motivated the CGG elements of \cite{CGG10}, which
take the same spaces for $V_h$ and $K_h$, but replace the shape function space for $M_h$ with a space which
is strictly smaller than $\mathcal{P}_k$ but which still contains $\mathcal{P}_{k-1}$.  These elements were shown to
satisfy the same error estimates as the AFW elements.

The elements in \cite{GG10} go in the other direction, increasing the AFW spaces to obtain a higher
rate of convergence.  The displacement space remains piecewise $\mathcal P_{k-1}$, but the rotation
space is increased to piecewise $\mathcal P_k$, and the stress space on each element consists
of $\mathcal P_k$ plus a number of higher degree bubble functions.  The same approach was taken by
Stenberg \cite{Sten88}, although a larger number of bubble functions were used and he required $k\ge 2$.
The four methods are summarized in Table~\ref{mixed-approx}.
\begin{table}[t]  
\caption{Complete polynomial degree included in the shape function spaces for various mixed methods.} \label{mixed-approx}
\centering
\begin{tabular}{>{\small}c >{\small}c >{\small}c >{\small}c >{\small}c}
\hline
\multirow{2}{*}{elements}	&  \multicolumn{3}{>{\small}c}{approximability} & \multirow{2}{*}{order } \\ 
						&$\sigma$	& $u$	&  $r$		&	\\  \hline \hline
AFW \cite{AFW07}			& $k$		& $k-1$	& $k-1$		& $k \geq 1$ \\ 
CGG \cite{CGG10}			& $k-1$		& $k-1$	& $k-1$		& $k \geq 1$ \\ 
Stenberg \cite{Sten88}		& $k$		& $k-1$	& $k$ 		& $k \geq 2$ \\ 
GG \cite{GG10}			& $k$		& $k-1$	& $k$		& $k \geq 1$ \\ \hline
\end{tabular}                           
\end{table}

These methods share a number of common features which will allow us to analyze them
in a unified fashion.  Each satisfies the stability conditions
\begin{itemize}
 \item[\bf (A0)] $\div M_h = V_h$,
 \item[\bf (A1)]  There exists $c>0$ so that for any $(u, r) \in V_h \times K_h$, there is $\tau \in M_h$ with
\begin{align*}
\div \tau = u, \quad (\tau, q) = (r, q), \quad \forall q \in K_h, \qquad \| \tau \|_{\div} \leq c(\|u\| + \|r \|). 
\end{align*}
\end{itemize}
These conditions imply that the mixed method is stable in the sense of Brezzi, and so admits a unique solution.
In order to get the best estimates, however, more structure is used.
For each of the methods there is a natural interpolation operator $\Pi_h : H^1(\Omega; \M) \ra M_h$ which satisfies
the commutativity condition
\begin{itemize}
 \item[\bf (A2)] $\div \Pi_h \sigma = P_h \div \sigma$, \quad$\sigma \in H^1(\Omega;\M)$,
\end{itemize}
where $P_h$ is the $L^2$ projection onto $V_h$.  We also denote by $P_h'$ the $L^2$ projection onto $K_h$.
The projection operator $\Pi_h$ is defined element by element and preserves the finite element space, so satisfies the error estimates
\begin{equation}\label{piest}
\| \sigma - \Pi_h \sigma \| \leq ch^m \| \sigma \|_m, \quad  1\le m \leq
\begin{cases}
k,  &\text{CGG}, \\
k+1, & \text{AFW, Stenberg, GG}.
\end{cases}
\end{equation}
The conditions {\bf (A0)}, {\bf (A1)}, and {\bf (A2)} imply the following error estimates, which improve on the basic
stability estimates:
\begin{gather}
\label{es-imp-err2} \| \sigma - \sigma_h \| + \| P_h u - u_h \| + \| r - r_h \| \leq c(\| \sigma - \Pi_h \sigma \| + \| r - P_h' r \|).
\end{gather}
These improved estimates appeared in \cite{AFW07, GG10, CGG10}
and an equivalent result for $\| \sigma - \sigma_h \|$ and $\| r - r_h \|$ is obtained in \cite{Sten88}. We refer to \cite{Guzman11} for a unified analysis. Combining this estimate with the approximation rates implied by Table~\ref{mixed-approx}, 
we obtain error bounds
\begin{align*}
\| \sigma - \sigma_h \| + \| P_h u - u_h \| + \| r - r_h \| \leq ch^m (\| \sigma \|_m + \| r \|_m), 
 \qquad 1 \leq m \leq \bar k,
\end{align*}
where $\bar k=k$ for the AFW and CGG elements, and $\bar k=k+1$
for the Stenberg and GG elements.

\subsection{The weakly symmetric elliptic projection operator}
Our error analysis for linear elastodynamics, will depend on a bounded projection $\tilde \Pi_h : M \ra M_h$
which we now define.  Let $M_h$, $V_h$, $K_h$ be one of the choices of finite element spaces
discussed in the previous section, and $\Pi_h:H^1(\Omega;\M)\to M_h$ the corresponding interpolant.
Given $\sigma \in M$, there exists a unique triple
$(\sigma_h, u_h, r_h) \in M_h \times V_h \times K_h$ such that
\begin{align} 
\label{es-weaksym-proj-eq1} ( \sigma_h, \tau) + (\div \tau, u_h) + (\tau, r_h) &= ( \sigma, \tau) ,  \qquad \quad \tau \in M_h,   \\
\label{es-weaksym-proj-eq2} (\div \sigma_h, w)  &= (\div \sigma, w) ,   \quad \;w \in V_h , \\
\label{es-weaksym-proj-eq3} (\sigma_h, q) &= (\sigma, q), \qquad \quad  q \in K_h .
\end{align}
In other words, $(\sigma_h, u_h, r_h)$ is the mixed method approximation of $(\sigma, 0,0)$.
We define $\tilde \Pi_h \sigma=\sigma_h$.  If $\sigma\in M_h$, we clearly have $\sigma_h=\sigma$, $u_h=0$, $r_h=0$,
so $\tilde\Pi_h$ is a projection.
We now establish some additional properties.

\begin{lemma} \label{weak-sym-elliptic}  For one of the mixed methods given in Table \ref{mixed-approx},
let $M_h$ be the stress space, $\Pi_h:H^1(\Omega; \mathbb{M}) \ra M_h$ the corresponding projection satisfying {\bf(A2)},
and $\tilde\Pi_h:M\to M_h$ the elliptic projection just defined.  Then
\begin{equation}\label{es-weaksym-proj1}
\div \tilde \Pi_h \sigma = P_h \div \sigma, \quad (\tilde \Pi_h \sigma, q) = (\sigma , q), \quad \sigma\in M, \ q \in K_h.
\end{equation}
Moreover, there exists a constant $c$ such that
\begin{equation} 
\label{es-weaksym-proj2} \| \tilde \Pi_h \sigma \|_{\div} \leq c \| \sigma \|_{\div}, \quad \| \sigma -  \tilde \Pi_h \sigma \| \leq c \| \sigma - \Pi_h \sigma \|, \quad \sigma\in H^1(\Omega;\M).
\end{equation}
\end{lemma}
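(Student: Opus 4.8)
The plan is to establish the three assertions in turn, using only the structural conditions \textbf{(A0)}, \textbf{(A1)}, \textbf{(A2)}; throughout write $\sigma_h=\tilde\Pi_h\sigma$, and $(\sigma_h,u_h,r_h)$ for the solution of \eqref{es-weaksym-proj-eq1}--\eqref{es-weaksym-proj-eq3}. The second identity in \eqref{es-weaksym-proj1}, $(\sigma_h,q)=(\sigma,q)$ for $q\in K_h$, is just \eqref{es-weaksym-proj-eq3}. For the first, \eqref{es-weaksym-proj-eq2} says $\div\sigma_h-\div\sigma$ is $L^2$-orthogonal to $V_h$, and since $\div\sigma_h\in\div M_h=V_h$ by \textbf{(A0)} while $P_h\div\sigma\in V_h$, we conclude $\div\sigma_h=P_h\div\sigma$. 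This identity already gives $\|\div\sigma_h\|=\|P_h\div\sigma\|\le\|\div\sigma\|$, so for the $H(\div)$-stability it only remains to bound $\|\sigma_h\|$. I would obtain this from the Brezzi theory applied to the saddle-point system \eqref{es-weaksym-proj-eq1}--\eqref{es-weaksym-proj-eq3}: its leading form is the $L^2$ inner product, which is coercive with respect to the graph norm on the kernel of the constraint form, because that kernel consists of divergence-free fields by \textbf{(A0)}; and \textbf{(A0)} together with \textbf{(A1)} furnish the inf--sup condition for the constraint form $(\tau,(w,q))\mapsto(\div\tau,w)+(\tau,q)$. The standard a priori bound then estimates $\|\sigma_h\|_{\div}$ by the norms of the data functionals $\tau\mapsto(\sigma,\tau)$, $w\mapsto(\div\sigma,w)$ and $q\mapsto(\sigma,q)$, each of which is at most $c\|\sigma\|_{\div}$.

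For the quasi-optimality estimate, set $e_h:=\sigma_h-\Pi_h\sigma\in M_h$. The decisive point is that $\div e_h=0$: indeed $\div\sigma_h=P_h\div\sigma$ by the identity just proved, while $\div\Pi_h\sigma=P_h\div\sigma$ by \textbf{(A2)}. Testing \eqref{es-weaksym-proj-eq1} with $\tau=e_h$ and using $\div e_h=0$ gives $(\sigma_h-\sigma,e_h)+(e_h,r_h)=0$; since $(\sigma_h,r_h)=(\sigma,r_h)$ by \eqref{es-weaksym-proj-eq3} with $q=r_h$, this rearranges to $\|e_h\|^2=(\sigma-\Pi_h\sigma,\,e_h-r_h)\le\|\sigma-\Pi_h\sigma\|(\|e_h\|+\|r_h\|)$. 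To control $\|r_h\|$, I would apply \textbf{(A1)} to the pair $(0,r_h)$, obtaining $\tau^\ast\in M_h$ with $\div\tau^\ast=0$, $(\tau^\ast,q)=(r_h,q)$ for all $q\in K_h$, and $\|\tau^\ast\|\le c\|r_h\|$; testing \eqref{es-weaksym-proj-eq1} with $\tau=\tau^\ast$ gives $\|r_h\|^2=(\sigma-\sigma_h,\tau^\ast)=(\sigma-\Pi_h\sigma-e_h,\tau^\ast)$, whence $\|r_h\|\le c(\|\sigma-\Pi_h\sigma\|+\|e_h\|)$. Substituting back and absorbing the $\|e_h\|$ terms by Young's inequality yields $\|e_h\|\le c\|\sigma-\Pi_h\sigma\|$, and then $\|\sigma-\sigma_h\|\le\|\sigma-\Pi_h\sigma\|+\|e_h\|\le c\|\sigma-\Pi_h\sigma\|$. (Alternatively one can deduce this bound from the improved error estimate \eqref{es-imp-err2}, applied to the mixed method with compliance tensor equal to the identity and exact solution $(\sigma,0,0)$, whose rotation component vanishes.)

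The routine parts are the two identities and the $H(\div)$-bound, essentially immediate once the saddle-point structure is in place. The main obstacle is getting the quasi-optimality with right-hand side $\|\sigma-\Pi_h\sigma\|$ rather than the full norm $\|\sigma\|_1$: Brezzi stability alone only delivers $\|r_h\|\le c\|\sigma\|_{\div}$, which is far too crude, so one must extract the sharper estimate $\|r_h\|\le c(\|\sigma-\Pi_h\sigma\|+\|e_h\|)$ from the tailored test function supplied by \textbf{(A1)}, and this hinges on $\div e_h=0$ — hence on the commutativity property \textbf{(A2)} together with the first identity in \eqref{es-weaksym-proj1}.
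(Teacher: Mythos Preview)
Your proof is correct. For \eqref{es-weaksym-proj1} and the $H(\div)$-stability your argument coincides with the paper's, which likewise reads the identities off from \eqref{es-weaksym-proj-eq2}--\eqref{es-weaksym-proj-eq3} together with \textbf{(A0)} and then invokes Brezzi stability. For the quasi-optimality $\|\sigma-\tilde\Pi_h\sigma\|\le c\|\sigma-\Pi_h\sigma\|$, the paper takes precisely the shortcut you mention in your parenthetical: it identifies \eqref{es-weaksym-proj-eq1}--\eqref{es-weaksym-proj-eq3} as the mixed approximation of the exact triple $(\sigma,0,0)$ and reads off the bound from \eqref{es-imp-err2} with $u=0$, $r=0$. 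Your primary argument, via the divergence-free difference $e_h=\sigma_h-\Pi_h\sigma$ and the tailored test function from \textbf{(A1)}, is in effect a self-contained derivation of that special case of \eqref{es-imp-err2}; it has the virtue of making explicit exactly which structural conditions are used (\textbf{(A0)}--\textbf{(A2)} and nothing more), whereas the paper's one-line citation keeps the lemma short but defers the real work to the literature.
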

\begin{proof} 
The properties in \eqref{es-weaksym-proj1} are immediate from \eqref{es-weaksym-proj-eq2} and \eqref{es-weaksym-proj-eq3}
in the definition of the elliptic projection, and the fact {\bf(A0)} that $\div M_h=V_h$.  The first estimate
in \eqref{es-weaksym-proj2} is a consequence of the Brezzi stability and the second estimate is just the error estimate \eqref{es-imp-err2} in the case $u=0$ and $r=0$.
\end{proof}

\section{Weak formulation of elastodynamics with weak symmetry} \label{section:weak-form}
In this section we derive a velocity-stress formulation of linear elastodynamics with weakly imposed symmetry of stress and show that it is well-posed. For simplicity, we only consider homogeneous displacement boundary conditions. 

In order to have a mixed form with velocity and stress, we set $v = \dot u$, $\sigma = C\e(u)$ in \eqref{ed-eq1}, and get a system of equations
\begin{align} \label{ed-velstress}
\rho \dot v - \div \sigma = f, \qquad A \dot \sigma = \e(v), 
\end{align}
where $A = C^{-1}$. For boundary conditions we take $v=0$, implied by the vanishing of $u$ on $\pd \Omega$, and, for initial conditions, $\sigma(0) =\sigma_0:= C \e(u_0)$, $v(0) = v_0$. We assume that the mass density $\rho$ satisfies $0 < \rho_0 \leq \rho \leq \rho_1 < \infty$ for constants $\rho_0$, $\rho_1$.

To establish well-posedness of this system, we recall the Hille--Yosida theorem. For a Hilbert space $\mathcal{X}$ and a closed, densely defined operator $\mathcal{L}$ on $\mathcal{X}$ with domain $D(\mathcal{L})$, we consider an evolution equation $\dot U = \mathcal{L} U + F$ with initial condition $U(0)=U_0$. The operator $\mathcal{L}$ is dissipative if $(\mathcal{L} u, u)_{\mathcal{X}}\le 0$ for $u \in D(\mathcal{L})$, and it is $m$-dissipative
if, further, $I - \mathcal{L} : D(\mathcal{L}) \ra \mathcal{X}$ is surjective (see \cite{Cazenave-Alain-book}, Definition~2.2.2, Proposition~2.2.6, and
Proposition~2.4.2). The Hille--Yosida theorem states that,
if $\mathcal{L}$ is an $m$-dissipative operator, $F \in W^{1,1}([0, T_0]; \mathcal{X})$, and  $U_0 \in D(\mathcal{L})$, then the initial value problem has a unique solution $U \in C^0([0,T_0]; D(\mathcal{L}) \cap C^1([0,T_0]; \mathcal{X})$ (see \cite{Cazenave-Alain-book}, Proposition 4.1.6). We now apply this to \eqref{ed-velstress}, which we rewrite as
\begin{align*}
\begin{pmatrix}
\dot \sigma \\
\dot v
\end{pmatrix}
=
\begin{pmatrix}
0 & C \e \\
\rho^{-1}\div & 0
\end{pmatrix}
\begin{pmatrix}
\sigma \\
v
\end{pmatrix}
+
\begin{pmatrix}
0 \\
\rho^{-1} f
\end{pmatrix}
.
\end{align*}
Let $\mathcal{X} = L^2(\Omega; \mathbb{S}) \times V$ be the Hilbert space with the inner product 
\begin{align*}
((\sigma, v), (\tau, w))_{\mathcal{X}} := (\sigma, \tau)_A + (v, w)_\rho = (A \sigma, \tau) + (\rho v, w). 
\end{align*}
We define the linear operator $\mathcal{L}$ as $\mathcal{L}(\sigma, v) = (C \e(v), \rho^{-1} \div \sigma)$. Note that $\mathcal{L}$ is an unbounded operator on $\mathcal{X}$ and its domain $D(\mathcal{L}) = S \times \mathring{H}^1(\Omega; \V)$ is dense in $\mathcal{X}$.

To apply the Hille--Yosida theorem, we verify that $\mathcal{L}$ is $m$-dissipative.  Let $(\sigma,v) \in D(\mathcal{L})$.  Then
\begin{align*}
(\mathcal{L} (\sigma,v) , (\sigma,v) )_{\mathcal{X}} 
&= ((C \e(v) , \rho^{-1}\div \sigma), (\sigma,v))_{\mathcal{X}} \\
&=  (\e(v) , \sigma) +(\div\sigma,v) = 0 ,
\end{align*}
where the last equality comes from  the integration by parts.  Thus $\mathcal L$ is dissipative.  To show that
it is $m$-dissipative, it remains to prove that $I - \mathcal{L}$ is surjective.  We shall show that,
for any given $(\eta, p) \in \mathcal{X}$, the weakly formulated problem
\begin{align*}
((I - \mathcal{L}) (\sigma, v), (\tau, w))_{\mathcal{X}} = ((\eta, p), (\tau, w))_{\mathcal{X}}, \qquad (\tau, w) \in D(\mathcal{L}),
\end{align*}
has a solution $(\sigma,v)\in D(\mathcal L)$.  If $(\sigma,v)$ satisfies
this weak formulation, then $(I - \mathcal{L}) (\sigma, v) = (\eta, p)$, since $D(\mathcal{L})$ is dense in $\mathcal{X}$.
The weak problem may be restated as
\begin{align}
\label{ed-hy-eq1} (\sigma - C\e(v), \tau)_A &= (\eta, \tau)_A, & &  \tau \in S, \\
\label{ed-hy-eq2} (v - \rho^{-1} \div \sigma, w)_\rho &= (p, w)_\rho, & &  w \in \mathring{H}^1(\Omega; \V). 
\end{align}
Rewriting \eqref{ed-hy-eq2} using the integration by parts and the symmetry of $\sigma$,  we get
\begin{align} \label{ed-hy-weak2}
(\rho v, w) + (\sigma, \e(w)) &= (\rho p, w), \quad w \in \mathring{H}^1(\Omega; \V). 
\end{align}
The equation \eqref{ed-hy-eq1} gives a constraint $\sigma - C\e(v) = \eta$, and substituting $\sigma$ in \eqref{ed-hy-weak2} by $C \e(v) + \eta$, we obtain
\begin{align*}
(\rho v, w) + (C \e(v), \e(w)) = (\rho p, w) - (\eta, \e(w)), \quad w \in \mathring{H}^1(\Omega; \V).
\end{align*}
By Korn's inequality and the Lax--Milgram lemma, this equation has a unique solution $v \in \mathring{H}^1(\Omega; \V)$. One can easily see that $\sigma= C \e(v) + \eta$ is in $L^2(\Omega; \mathbb{S})$, and also in $M \cap L^2(\Omega; \mathbb{S}) = S$ because the equation \eqref{ed-hy-weak2} implies that $\div \sigma$ is well-defined in the sense of distributions. This completes the verification that $\mathcal{L}$ is $m$-dissipative.

We may therefore apply the Hille--Yosida theorem, and obtain the following result.
Given $\sigma_0 \in S$, $v_0 \in \mathring{H}^1(\Omega; \V)$, and $f \in W^{1,1}([0,T_0]; \V)$, then there exist
\begin{align*}
\sigma &\in C^0([0,T_0]; S) \cap C^1([0,T_0]; L^2(\Omega; \mathbb{S})), \\
v &\in C^0([0,T_0]; \mathring{H}^1(\Omega; \V)) \cap C^1([0,T_0]; V),
\end{align*}
satisfying the evolution equations \eqref{ed-velstress} and assuming the given initial data. 

Now we describe a weak formulation of \eqref{ed-velstress} with weak symmetry of stress. We assume that $\sigma_0 = C \e(u_0)$ for some $u_0 \in \mathring{H}^1(\Omega; \V)$. If we define 
\begin{align} \label{u-integral}
u(t) = u_0 + \int_0^t v(s)\,ds,
\end{align}
then, using $A \dot \sigma = \e(v)$ and the fundamental theorem of calculus, we get $A \sigma = \e(u)$. If we set 
\begin{align} \label{r-def}
r = \skw \grad u,
\end{align}
then  $\dot r = \skw \grad v$. Integrating the second equation of \eqref{ed-velstress} by parts with the boundary conditions $v \equiv 0$ on $\pd \Omega$, we get $(A \dot \sigma, \tau) = (\e(v), \tau) = (\grad v - \dot r, \tau) = -(v, \div \tau) - (\dot r, \tau)$ for all $\tau \in M$, i.e.,
\begin{align}
(A \dot{\sigma}, \tau) + ( \div \tau, v ) + (\dot r, \tau) = 0 , \qquad \tau \in M.
\end{align}
From the first equation of \eqref{ed-velstress}, we get $( \rho \dot{v}, w ) - ( \div \sigma, w ) = ( f, w )$ for $w \in V$. Finally, the symmetry of $\sigma$ gives $(\dot \sigma, q) = 0$ for $q \in K$. The equations together constitute
our weak formulation with weak symmetry of stress.  We seek
\begin{equation}\label{ed-regular}
 \begin{gathered}
\sigma \in C^0 ([0, T_0]; M) \cap C^1 ([0, T_0]; L^2(\Omega; \mathbb{M})), \\
 v \in C^1 ([0, T_0];V)) , \quad   r \in C^1  ([0, T_0];K),
\end{gathered}
\end{equation}
such that
\begin{align}
\label{ed-weaksym1} (A \dot{\sigma}, \tau) + ( \div \tau, v ) + (\dot r, \tau) &= 0 , \qquad \qquad \tau \in M, \\
\label{ed-weaksym2} ( \rho \dot{v}, w ) - ( \div \sigma, w ) &= ( f, w ) , \qquad w \in V,\\
\label{ed-weaksym3} (\dot \sigma, q) &= 0 , \qquad \qquad  q \in K,  
\end{align}
with given initial data $(\sigma_0, v_0, r_0) = (C\e(u_0), v_0, \skw \grad u_0)$.
We now show that this problem is well-posed.
\begin{thm} Let $f\in W^{1,1}([0,T_0];\V)$ and $u_0, v_0 \in \mathring{H}^1(\Omega; \V)$.  Set $\sigma_0 = C\e(u_0)$, $r_0 = \skw \grad u_0$.  Then the system {\rm (\ref{ed-regular}--\ref{ed-weaksym3})} has a unique solution assuming the given
initial data.
\end{thm}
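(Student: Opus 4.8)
The plan is to obtain existence by reducing to the strongly symmetric velocity--stress system \eqref{ed-velstress}, whose well-posedness was just established, and to obtain uniqueness by a direct energy estimate on the difference of two solutions.

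For existence, I would start from that well-posedness result: since $\sigma_0=C\e(u_0)\in S$ and $v_0\in\mathring{H}^1(\Omega;\V)$, i.e. $(\sigma_0,v_0)\in D(\mathcal L)$, there is a pair $(\sigma,v)$ with $\sigma\in C^0([0,T_0];S)\cap C^1([0,T_0];L^2(\Omega;\mathbb S))$ and $v\in C^0([0,T_0];\mathring{H}^1(\Omega;\V))\cap C^1([0,T_0];V)$ that solves \eqref{ed-velstress} and assumes the initial data $(\sigma_0,v_0)$. Defining $u$ by \eqref{u-integral} and $r=\skw\grad u$ as in \eqref{r-def}, the computation carried out just before the theorem shows that $(\sigma,v,r)$ satisfies \eqref{ed-weaksym1}--\eqref{ed-weaksym3} with initial data $(\sigma_0,v_0,\skw\grad u_0)$. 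It then remains only to check the regularity \eqref{ed-regular}: for $\sigma$ this is immediate from the continuous inclusions $S\hookrightarrow M$ and $L^2(\Omega;\mathbb S)\hookrightarrow L^2(\Omega;\M)$; $v\in C^1([0,T_0];V)$ is part of the well-posedness statement; and since $v\in C^0([0,T_0];\mathring{H}^1(\Omega;\V))$, the primitive $u=u_0+\int_0^t v$ lies in $C^1([0,T_0];\mathring{H}^1(\Omega;\V))$ with $\dot u=v$, so $r=\skw\grad u\in C^1([0,T_0];K)$ with $\dot r=\skw\grad v$.

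For uniqueness, let $(\bar\sigma,\bar v,\bar r)$ be the difference of two solutions with the same data; it enjoys the regularity \eqref{ed-regular}, solves \eqref{ed-weaksym1}--\eqref{ed-weaksym3} with $f\equiv0$, and vanishes at $t=0$. Letting $q$ range over $K$ in \eqref{ed-weaksym3} gives $\skw\dot{\bar\sigma}(t)=0$, so $\dot{\bar\sigma}(t)$ is symmetric and hence so is $\bar\sigma(t)=\int_0^t\dot{\bar\sigma}(s)\,ds$. Taking $\tau=\bar\sigma(t)\in M$ in \eqref{ed-weaksym1}, the coupling term $(\dot{\bar r},\bar\sigma)$ vanishes because $\dot{\bar r}(t)\in K$ is skew while $\bar\sigma(t)$ is symmetric, leaving $(A\dot{\bar\sigma},\bar\sigma)+(\div\bar\sigma,\bar v)=0$; taking $w=\bar v(t)\in V$ in \eqref{ed-weaksym2} gives $(\rho\dot{\bar v},\bar v)-(\div\bar\sigma,\bar v)=0$. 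Adding these, and using that $A$ is symmetric and $A,\rho$ are independent of $t$,
\[
\frac{d}{dt}\Bigl[\half(A\bar\sigma,\bar\sigma)+\half(\rho\bar v,\bar v)\Bigr]=0 ,
\]
so integration from $0$ together with the vanishing of the initial data yields $(A\bar\sigma(t),\bar\sigma(t))+(\rho\bar v(t),\bar v(t))=0$; coercivity of $A$ and the bound $\rho\ge\rho_0>0$ force $\bar\sigma\equiv0$ and $\bar v\equiv0$. Then \eqref{ed-weaksym1} collapses to $(\dot{\bar r}(t),\tau)=0$ for all $\tau\in M$, and since $M$ is dense in $L^2(\Omega;\M)$ this gives $\dot{\bar r}\equiv0$, whence $\bar r\equiv0$ because $\bar r(0)=0$.

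I do not expect a serious obstacle: the argument is essentially bookkeeping on top of the Hille--Yosida machinery already in place. The two places that need a moment's care are, on the existence side, confirming the regularity \eqref{ed-regular} --- in particular $r\in C^1([0,T_0];K)$, which relies on the $D(\mathcal L)$-valued (not merely $\mathcal X$-valued) continuity of $v$ so that $\grad v$ is continuous into $L^2$ --- and, on the uniqueness side, legitimizing the energy identity, which uses $\bar\sigma,\bar v\in C^1([0,T_0];L^2)$ from \eqref{ed-regular} together with the observation that $\bar\sigma(t)$ remains symmetric, so the weak-symmetry coupling term $(\dot{\bar r},\bar\sigma)$ drops out.
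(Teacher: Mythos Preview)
Your proof is correct and follows essentially the same route as the paper's: existence via the Hille--Yosida solution of \eqref{ed-velstress} together with the definitions \eqref{u-integral}--\eqref{r-def}, and uniqueness via the energy identity obtained by testing with $\tau=\bar\sigma$, $w=\bar v$ after observing that $\bar\sigma(t)$ is symmetric. You spell out more carefully than the paper does why $\bar\sigma$ is symmetric (integrating $\dot{\bar\sigma}\perp K$ from the zero initial value) and why $r\in C^1([0,T_0];K)$, but these are elaborations of the same argument rather than a different approach.
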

\begin{proof} By the Hille--Yosida theorem, the equation \eqref{ed-velstress} has a solution $(\sigma, v)$ with the initial data $(\sigma_0, v_0)$. We define $u$ by \eqref{u-integral} and $r$ by \eqref{r-def}.  The resulting triple $(\sigma, v, r)$ then satisfies (\ref{ed-weaksym1}--\ref{ed-weaksym3}), and takes on the desired initial values.  We have thus
proven existence of a solution.

For uniqueness, suppose that there are two solutions of (\ref{ed-weaksym1}--\ref{ed-weaksym3}) with same initial data, and denote their difference by $(\sigma^d, v^d, r^d)$. Then this triple satisfies 
\begin{align}
\label{ed-weaksym1-diff} (A \dot{\sigma}^d, \tau) + ( \div \tau, v^d ) + (\dot r^d, \tau) &= 0 , & & \tau \in M, \\
\notag ( \rho \dot{v}^d, w ) - ( \div \sigma^d, w ) &= 0 , & & w \in V,\\
\notag (\dot \sigma^d, q) &= 0 , & & q \in K, 
\end{align}
with $\sigma^d(0)$,  $v^d(0)$, and $r^d(0)$ all zero. Now we set $\tau = \sigma^d$, $w = v^d$ in the first two equations and add them. Since $\sigma^d \perp K$ and $\dot r \in K$, we have $(\dot r, \sigma) = 0$, so the sum of two equations gives 
\begin{align*}
\half \frac d{dt} \| \sigma^d \|_A^2 + \half \frac d{dt} \| v^d \|_\rho^2 = 0.
\end{align*}
Therefore $\| \sigma^d (t) \|_A^2 + \| v^d (t) \|_\rho^2 = \| \sigma^d (0) \|_A^2 + \| v^d (0) \|_\rho^2 =0$, so $\sigma^d \equiv 0 \equiv v^d$. From \eqref{ed-weaksym1-diff}, one then sees that $\dot r^d \equiv 0$ as well. Since $r^d(0) = 0$, we have $r^d \equiv 0$, so uniqueness is proved. 
\end{proof}

We close this section by pointing out the straightforward changes needed to handle mixed displacement--traction boundary
conditions in our velocity-stress formulation. Suppose $\Gamma_D$ and $\Gamma_N$ are two disjoint open subsets of
$\pd\Omega$ with $\pd \Omega = \overline {\Gamma}_D \cup \overline{\Gamma}_N$ and $\Gamma_D$ nonempty. We consider the boundary conditions $v = g$ on $\Gamma_D$, $\sigma \nu = \kappa$ on $\Gamma_N$ where
\begin{align*}
g : [0, T_0] \times \Gamma_D \ra \R^n, \qquad \kappa : [0, T_0] \times \Gamma_N \ra \R^n
\end{align*}
are given.
We define $M_{\Gamma_N} = \{ \tau \in M \,|\, \tau \nu = 0 \text{ on } \Gamma_N \}$.
Then a velocity-stress formulation with weak symmetry seeks $(\sigma, v, r)$ satisfying \eqref{ed-regular} with $\sigma \nu = \kappa$ on $\Gamma_N$ and 
\begin{align} 
\notag (A \dot{\sigma}, \tau) + ( \div \tau, v ) + (\dot r, \tau) &= \int_{\Gamma_D} g \cdot \tau \nu \, ds , & &\tau \in M_{\Gamma_N}, \\
\label{ed-inhomog-dirichlet} ( \rho \dot{v}, w ) - ( \div \sigma, w ) &= ( f, w ) , & & w \in V , \\ 
\notag (\dot \sigma, q) &= 0 , & & q \in K.
\end{align}
The initial data must satisfy the compatibility conditions $\sigma_0 \nu = \kappa(0)$ on $\Gamma_N$ and $v_0 = g(0)$ on $\Gamma_D$.

\section{Semidiscrete error analysis for the AFW and CGG elements} \label{section:semidiscrete}
In this section we consider spatial discretization of problem (\ref{ed-weaksym1}--\ref{ed-weaksym3}) with given initial data. We show existence and uniqueness of semidiscrete solutions and discuss the semidiscrete error analysis. Although the main result of this section is stated for the AFW and CGG elements, the results in this section are valid for all elements in Table \ref{mixed-approx}. We will discuss improved results for the Stenberg and GG elements in section~\ref{section:imp-err-analysis}.

\subsection{The semidiscrete problem} Let $M_h \times V_h \times K_h$ be one of the elements in {\rm Table} \ref{mixed-approx}.
Given initial data $(\sigma_{h0}, v_{h0}, r_{h0})\in M_h \times V_h \times K_h$, the semidiscretization of {\rm (\ref{ed-weaksym1}--\ref{ed-weaksym3})} seeks
\begin{equation}\label{ed-semi-regular}
 \sigma_h \in C^1 ([0, T_0]; M_h), \quad v_h \in C^1 ([0, T_0];V_h), \quad r_h \in C^1 ([0, T_0];K_h),
\end{equation}
satisfying the equations
\begin{align} 
\label{ed-semidiscrete-eq1} (A \dot{\sigma}_h, \tau) + (\div \tau, v_h) + ({\dot r}_h, \tau) &= 0, & & \tau \in M_h, \\
\label{ed-semidiscrete-eq2} ( \rho  \dot{v}_h, w ) - (\div  \sigma_h, w)  &= ( f, w ), & & w \in V_h,  \\
\label{ed-semidiscrete-eq3} (\dot \sigma_h, q ) &= 0, & & q \in K_h, 
\end{align}
for all time $t \in [0, T_0]$, and assuming the given initial data.

\begin{thm}
The semidiscrete system has a unique solution.
\end{thm}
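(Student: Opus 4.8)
The plan is to recognize that, despite the algebraic appearance of \eqref{ed-semidiscrete-eq3}, the system \eqref{ed-semidiscrete-eq1}--\eqref{ed-semidiscrete-eq3} is an honest first-order \emph{ordinary} differential system for the coefficients of $(\sigma_h,v_h,r_h)$ in fixed bases of $M_h$, $V_h$, $K_h$: the time derivatives $\dot\sigma_h$, $\dot v_h$, $\dot r_h$ are uniquely and boundedly determined by the current state $(\sigma_h,v_h,r_h)$ and the data $f(t)$. Once this is established, existence and uniqueness on all of $[0,T_0]$ follow from the standard theory of linear systems of ODEs.

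First I would read off $\dot v_h$ from \eqref{ed-semidiscrete-eq2}: since $0<\rho_0\le\rho\le\rho_1<\infty$, the bilinear form $(\rho\,\cdot\,,\cdot)$ is symmetric positive definite on $V_h$, so $(\rho\dot v_h,w)=(\div\sigma_h,w)+(f,w)$, $w\in V_h$, determines $\dot v_h\in V_h$ uniquely as a bounded linear function of $\sigma_h$ plus a term depending on $f(t)$. Next, treating $v_h$ as known, I would look at \eqref{ed-semidiscrete-eq1} together with \eqref{ed-semidiscrete-eq3} as the square linear system: find $(\dot\sigma_h,\dot r_h)\in M_h\times K_h$ with
\begin{align*}
(A\dot\sigma_h,\tau)+(\dot r_h,\tau) &= -(\div\tau,v_h), & & \tau\in M_h,\\
(\dot\sigma_h,q) &= 0, & & q\in K_h.
\end{align*}
To prove this is nonsingular it suffices, the system being square, to show the homogeneous problem has only the trivial solution. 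Taking $\tau=\dot\sigma_h$ in the first equation and using the second (with $q=\dot r_h\in K_h$) to annihilate $(\dot r_h,\dot\sigma_h)$, one gets $(A\dot\sigma_h,\dot\sigma_h)=0$, hence $\dot\sigma_h=0$ because $A$ is symmetric positive definite on $\M$. The first equation then reads $(\dot r_h,\tau)=0$ for all $\tau\in M_h$; applying {\bf (A1)} with $u=0$ and $r=\dot r_h$ yields $\tau\in M_h$ with $(\tau,\dot r_h)=\|\dot r_h\|^2$, forcing $\dot r_h=0$. Thus $(\dot\sigma_h,\dot r_h)$ is uniquely determined and depends linearly and boundedly on $v_h$.

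Combining the two steps, \eqref{ed-semidiscrete-eq1}--\eqref{ed-semidiscrete-eq3} is equivalent to $\dot U_h=\mathcal{L}_h(t)U_h+F_h(t)$ with $U_h=(\sigma_h,v_h,r_h)$, where $\mathcal{L}_h(t)$ is a linear operator on the finite-dimensional space $M_h\times V_h\times K_h$ and $F_h$ is built from $f$; since $f\in W^{1,1}([0,T_0];\V)\subset C^0([0,T_0];\V)$, the right-hand side is continuous in $t$. The Picard--Lindel\"of theorem for linear systems then gives a unique solution on all of $[0,T_0]$ taking the prescribed initial data $(\sigma_{h0},v_{h0},r_{h0})$, and continuity of the right-hand side gives $U_h\in C^1$, i.e., the regularity \eqref{ed-semi-regular}. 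The only nonroutine step is the nonsingularity of the block governing $\dot\sigma_h$ and $\dot r_h$, where the stability condition {\bf (A1)} is essential; everything else is standard.
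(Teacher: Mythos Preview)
Your proof is correct and takes essentially the same approach as the paper. The paper writes the system in matrix form with bases for $M_h$, $V_h$, $K_h$ and argues that the coefficient matrix multiplying the time derivatives is invertible because the $A$- and $\rho$-mass blocks are positive definite and the off-diagonal block coming from the pairing $(\tau,q)$ is injective by {\bf (A1)}; your two-step argument (first solve for $\dot v_h$, then show the $(\dot\sigma_h,\dot r_h)$ block is nonsingular using $A$ positive definite and {\bf (A1)}) is precisely the same block elimination spelled out at the variational level.
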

\begin{proof} Let $\{ \phi_i \}$, $\{ \psi_i \}$, $\{ \chi_i \}$ be bases of $M_h$, $V_h$, and $K_h$, respectively. We use $\mathscr{A}$, $\mathscr{B}$, $\mathscr{C}$, $\mathscr{M}$ to denote the matrices whose $(i,j)$-entries are
\begin{align*}
(A \phi_j, \phi_i), \quad (\div \phi_j, \psi_i), \quad (\phi_j, \chi_i), \quad (\rho \psi_j, \psi_i),
\end{align*}
respectively. We write $\sigma_h = \sum_i \alpha_i \phi_i$, $v_h = \sum_i \beta_i \psi_i$, $r_h = \sum_i \gamma_i \chi_i$, and set $\zeta_i =(f,\psi_i)$, and use $\alpha$, $\beta$, $\gamma$, $\zeta$ to denote the corresponding vectors. Then we may rewrite (\ref{ed-semidiscrete-eq1}--\ref{ed-semidiscrete-eq3}) in a matrix equation form, 
\begin{align*} 
\begin{pmatrix}
\mathscr{A} & 0 & \mathscr{C}^T \\
0 & \mathscr{M} & 0 \\
\mathscr{C} & 0 & 0
\end{pmatrix}
\begin{pmatrix}
\dot \alpha \\
\dot \beta \\
\dot \gamma
\end{pmatrix}
= 
\begin{pmatrix}
0 & -\mathscr{B}^T & 0 \\
\mathscr{B} & 0 & 0 \\
0 & 0 & 0
\end{pmatrix}
\begin{pmatrix}
\alpha \\
\beta \\
\gamma
\end{pmatrix}
+ 
\begin{pmatrix}
0\\
\zeta \\
0
\end{pmatrix}.
\end{align*}
The above matrix equation is a linear system of ordinary differential equations. Note that the coefficient matrix on the left-hand side is invertible because $\mathscr{A}$ and $\mathscr{M}$ are positive definite and $\mathscr{C}^T$ is injective from the inf-sup condition {\bf (A1)}. By standard ODE theory (see \cite{Coddington-Levinson-book}, p.75), the matrix equation is well-posed as an initial value problem, so the existence and uniqueness of solutions of (\ref{ed-semidiscrete-eq1}--\ref{ed-semidiscrete-eq3}) follow.
\end{proof}

Next we discuss the construction of initial data for the semidiscretization, starting from the initial
data $u_0,v_0\in \mathring{H}^1(\Omega; \V)$ for the continuous problem.
As initial data for the velocity we simply take
\begin{equation}\label{did0}
 v_{h0}=P_h v_0.
\end{equation}
Recall that we obtained initial
data for $\sigma$ and $r$ as $\sigma_0 = C\e(u_0)$ and $r_0 = \skw \grad u_0$.  Consequently,
$(A \sigma_0, \tau) + (\div \tau, u_0) + (r_0, \tau) = 0$ for $\tau \in M$, and $\sigma_0\perp q$ for $q\in K$.  We compute the initial data
for $\sigma_h$, $u_h$, and $r_h$, from
a mixed elliptic problem:  $(\sigma_{h0}, u_{h0}, r_{h0}) \in M_h \times V_h \times K_h$ of the system,
\begin{align}
\label{did1}
(A \sigma_{h0}, \tau) + (\div \tau, u_{h0}) + (r_{h0}, \tau) &= 0, & & \tau \in M_h, \\
\label{did2}
(\div \sigma_{h0}, w) &= (\div \sigma_0, w), & & w \in V_h, \\
\label{did3}
(\sigma_{h0}, q) &= 0,  & & q \in K_h,
\end{align}
for which we know, by section~\ref{section:mfem}, that there exists a unique solution and we have the error estimate
\begin{equation}
 \label{ed-compatible-ic2}
\| \sigma_0 - \sigma_{h0}, r_0 - r_{h0} \| \leq ch^m \| \sigma_0, r_0 \|_m, \qquad 1 \leq m \leq 
\begin{cases}
 k, & \text{AFW, CGG},\\ k+1, & \text{Stenberg, GG}.
\end{cases}
\end{equation}

\subsection{Decomposition of semidiscrete errors} For the error analysis, we follow a standard approach: representatives of $(\sigma, v, r)$ are used to split the semidiscrete error into the projection error and the approximation error, and bounds are obtained by a priori error analysis. 

We now state the main convergence result for the AFW and CGG elements.
\begin{thm} \label{ed-semidiscrete-thm} Let $(M_h, V_h, K_h)$ be the \rm{AFW} or \rm{CGG} elements in {\rm Table \ref{mixed-approx}} of order $k \geq 1$ and let $m$ be a real number such that $1 \leq m \leq k$. Suppose that $\sigma, v, r \in W^{1,1} ([0,T_0]; H^m)$ and let $(\sigma_h, v_h, r_h)$ be the solution of {\rm (\ref{ed-semi-regular}--\ref{ed-semidiscrete-eq3})} with initial data $(\sigma_{h0}, v_{h0}, r_{h0})$ defined as in (\ref{did0}--\ref{did3}). Then we have
\begin{gather*}
\| \sigma - \sigma_h , v - v_h , r - r_h \|_{L^\infty L^2} \leq c h^m \| \sigma, v, r \|_{W^{1,1} H^m},
\end{gather*}
where $c$ depends on the compliance tensor $A$, and the lower and upper bounds of the mass density $\rho_0$, $\rho_1$. 
\end{thm}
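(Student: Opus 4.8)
The plan is to use the weakly symmetric elliptic projection $\tilde\Pi_h$ from Lemma~\ref{weak-sym-elliptic} to split the stress error, and the $L^2$-projections $P_h$, $P_h'$ to split the velocity and rotation errors. Write $\sigma_h-\sigma = (\sigma_h - \tilde\Pi_h\sigma) + (\tilde\Pi_h\sigma - \sigma) =: e_\sigma + \theta_\sigma$, and similarly $v_h - v = e_v + \theta_v$ with $e_v = v_h - P_h v$, $\theta_v = P_h v - v$, and $r_h - r = e_r + \theta_r$ with $e_r = r_h - P_h' r$, $\theta_r = P_h' r - r$. The projection errors $\theta_\sigma$, $\theta_v$, $\theta_r$ are controlled directly: by the second estimate in \eqref{es-weaksym-proj2} and \eqref{piest}, $\|\theta_\sigma(t)\| \le c\|\sigma(t) - \Pi_h\sigma(t)\| \le ch^m\|\sigma(t)\|_m$, and by standard $L^2$-projection estimates $\|\theta_v(t)\| + \|\theta_r(t)\| \le ch^m(\|v(t)\|_m + \|r(t)\|_m)$. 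So everything reduces to bounding $e_\sigma$, $e_v$, $e_r$ in $L^\infty L^2$.

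Next I would derive the error equations satisfied by $(e_\sigma, e_v, e_r)$ by subtracting the semidiscrete equations (\ref{ed-semidiscrete-eq1}--\ref{ed-semidiscrete-eq3}) from the continuous ones (\ref{ed-weaksym1}--\ref{ed-weaksym3}) tested against $\tau\in M_h$, $w\in V_h$, $q\in K_h$, and then inserting the splittings. The point of using $\tilde\Pi_h$ rather than $\Pi_h$ is that the properties \eqref{es-weaksym-proj1}, namely $\div\tilde\Pi_h\sigma = P_h\div\sigma$ and $(\tilde\Pi_h\sigma - \sigma, q) = 0$ for $q\in K_h$, together with $\div M_h = V_h$ (condition {\bf (A0)}) and the commutativity of $\partial_t$ with the projections, cause most of the consistency terms involving $\theta_\sigma$ to vanish. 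Concretely, $(\div\tau, \theta_v) = (\div\tau, P_h v - v) = 0$ since $\div\tau\in V_h$; $(\dot\theta_r, \tau)$ pairs a skew field with $\tau\in M_h$ but only its $K_h$-component survives, and one checks $(\tau,\dot\theta_r)=0$ because $\dot\theta_r = P_h'\dot r - \dot r \perp K_h$... wait, that is an $L^2$ statement, and $\tau$ need not lie in $K_h$, so this term contributes $(\tau - P_h'\tau, \dot r)$-type remainders — I would keep careful track and expect one genuine consistency term of the form $(A\dot\theta_\sigma, \tau)$ (from $A\tilde\Pi_h$ not commuting with $\partial_t$ cleanly against the weighted inner product) and a $(\dot\theta_r,\tau)$ term. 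After the dust settles, the error system should read, for all $\tau\in M_h$, $w\in V_h$, $q\in K_h$:
\begin{align*}
(A\dot e_\sigma, \tau) + (\div\tau, e_v) + (\dot e_r, \tau) &= -(A\dot\theta_\sigma, \tau) - (\dot\theta_r, \tau), \\
(\rho\dot e_v, w) - (\div e_\sigma, w) &= 0, \\
(\dot e_\sigma, q) &= 0.
\end{align*}
Here the right side of the second equation is zero by {\bf (A0)} and \eqref{es-weaksym-proj-eq2} (since $\dot\theta_\sigma$ has zero $V_h$-divergence component and $\div\theta_\sigma \perp V_h$), and the third holds because $\dot e_\sigma = \dot\sigma_h - \partial_t\tilde\Pi_h\sigma$ and both pieces are $L^2$-orthogonal to $K_h$.

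Then comes the energy argument, which is the heart of the matter. I would take $\tau = e_\sigma$ in the first equation, $w = e_v$ in the second, and add. The key cancellation: $(\div e_\sigma, e_v)$ from the first line cancels $-(\div e_\sigma, e_v)$ from the second; and $(\dot e_r, e_\sigma) = 0$ because $\dot e_r\in K_h$ while $e_\sigma \perp K_h$ by the third error equation. This leaves
\begin{align*}
\tfrac12\tfrac{d}{dt}\big(\|e_\sigma\|_A^2 + \|e_v\|_\rho^2\big) = -(A\dot\theta_\sigma, e_\sigma) - (\dot\theta_r, e_\sigma).
\end{align*}
Bounding the right side by $c(\|\dot\theta_\sigma\| + \|\dot\theta_r\|)\|e_\sigma\|$ — using boundedness of $A$ and $\rho$ — and integrating in time from $0$, a Gronwall argument (or directly, since the energy norm dominates $\|e_\sigma\|$ up to the constants $A$, $\rho_0$, $\rho_1$) yields
\begin{align*}
\|e_\sigma(t)\|^2 + \|e_v(t)\|^2 \le c\Big(\|e_\sigma(0)\|^2 + \|e_v(0)\|^2 + \int_0^{T_0}(\|\dot\theta_\sigma\| + \|\dot\theta_r\|)^2\,dt\Big)
\end{align*}
or, more carefully, one integrates $\frac{d}{dt}\|(e_\sigma,e_v)\|_{\text{energy}} \le c(\|\dot\theta_\sigma\| + \|\dot\theta_r\|)$ to get an $L^1$-in-time bound matching the $W^{1,1}H^m$ norm on the right. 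The initial errors are handled by \eqref{ed-compatible-ic2} (which gives $\|e_\sigma(0)\| = \|\sigma_{h0} - \tilde\Pi_h\sigma_0\|$; note $\sigma_{h0}$ is itself the elliptic projection of $\sigma_0$, so in fact $e_\sigma(0) = 0$, and $e_v(0) = v_{h0} - P_h v_0 = 0$ by \eqref{did0}) and the choice \eqref{did0}. Finally, to recover $\|e_r\|_{L^\infty L^2}$, I would go back to the first error equation and use condition {\bf (A1)}: given $e_r(t)\in K_h$ (and $e_v(t)\in V_h$), choose $\tau\in M_h$ with $\div\tau = e_v$, $(\tau, q) = (e_r, q)$ for $q\in K_h$, $\|\tau\|_{\div} \le c(\|e_v\| + \|e_r\|)$... actually it is cleaner to test the first equation with a $\tau$ realizing $(\tau, q) = (\dot e_r, q)$ only, i.e. apply {\bf (A1)} with $u = 0$, $r = \dot e_r$ after differentiating — or simply bound $(\dot e_r, \tau)$ by rearranging the first equation and using the already-controlled quantities $\dot e_\sigma$ (controlled in $L^1 L^2$ from the equation itself), $e_v$, and the consistency terms; then $\|e_r(t)\| \le \|e_r(0)\| + \int_0^t\|\dot e_r\|$. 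I expect the main obstacle to be the bookkeeping in this last step — extracting an $L^\infty L^2$ bound on $e_r$ from the mixed structure without losing a power of $h$ — and in verifying precisely which consistency terms survive the projection identities; the energy estimate itself is robust and, crucially, the constant depends only on $A$, $\rho_0$, $\rho_1$ and not on $\lambda$, since the bound on $A$ (hence on $A^{-1}$ restricted appropriately) is $\lambda$-independent, giving the claimed locking-free estimate.
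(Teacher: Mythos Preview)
Your overall strategy---split via $\tilde\Pi_h$, $P_h$, $P_h'$, derive error equations, and run the energy argument with $\tau=e_\sigma$, $w=e_v$---is exactly the paper's.  There are, however, three genuine gaps.

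First, two minor slips.  In the second error equation you drop the consistency term $(\rho\dot\theta_v,w)$; it does \emph{not} vanish for variable $\rho$ since $\rho w\notin V_h$ in general, and the paper keeps $-(\rho\dot e_v^P,e_v^h)$ in \eqref{ed-evol-eq0}.  Also, your claim that $e_\sigma(0)=0$ because ``$\sigma_{h0}$ is itself the elliptic projection of $\sigma_0$'' is false: the projection \eqref{es-weaksym-proj-eq1}--\eqref{es-weaksym-proj-eq3} uses the unweighted form $(\sigma_h,\tau)$, while the initial data system \eqref{did1}--\eqref{did3} uses $(A\sigma_{h0},\tau)$, so $\sigma_{h0}\neq\tilde\Pi_h\sigma_0$.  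The paper bounds $\|e_\sigma^h(0)\|_A$ by the triangle inequality via \eqref{ed-compatible-ic2} and \eqref{es-weaksym-proj2}; see \eqref{sigma0-estm}.

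The serious gap is your treatment of $e_r$.  Both routes you sketch---choosing $\tau$ from {\bf (A1)} to realize $\dot e_r$, or rearranging the first error equation---require control of $\|\dot e_\sigma\|$, which the energy estimate does \emph{not} supply (it yields only $\|e_\sigma\|_{L^\infty L^2}$).  Differentiating the system in time to recover $\dot e_\sigma$ would need $W^{2,1}H^m$ regularity, stronger than the hypothesis.  The paper's fix is to observe that the \emph{undifferentiated} identity $(Ae_\sigma,\tau)+(e_r,\tau)=0$ holds for every divergence-free $\tau\in M_h$ and every $t$: it holds at $t=0$ precisely because of the choice of initial data \eqref{did1}, and its time derivative vanishes by the first error equation restricted to divergence-free $\tau$.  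Applying {\bf (A1)} with $u=0$, $r=e_r^h(t)$ then gives $\|e_r^h(t)\|\le c\|e_\sigma^h(t),e_\sigma^P(t),e_r^P(t)\|$ directly, with no time derivative of $e_\sigma$ needed.  (Your closing remark about $\lambda$-independence is not part of this theorem---the constant here depends on $A$---and is handled separately in section~\ref{ed-robust}.)
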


For our error analysis, we denote the semidiscrete errors, i.e., the difference of the exact solution $(\sigma, v, r)$ and the semidiscrete solution $(\sigma_h, v_h, r_h)$, by 
\begin{align*}
\es = \sigma - \sigma_h, \quad e_v = v - v_h, \quad e_r = r - r_h.
\end{align*}
Then, by taking differences of equations (\ref{ed-weaksym1}--\ref{ed-weaksym3}) and (\ref{ed-semidiscrete-eq1}--\ref{ed-semidiscrete-eq3}), we get  
\begin{align}
\label{ed-err-eq1}  (A \dot e_\sigma, \tau) + (\div \tau, e_v) + (\dot e_r, \tau) &= 0 , & & \tau \in M_h, \\
\label{ed-err-eq2}  ( \rho \dot e_v , w ) - (\div \es, w)  &= 0 , & & w \in V_h, \\
\label{ed-err-eq3} (\dot e_\sigma, q) &= 0, & & q \in K_h.
\end{align}
Recall that $\tilde \Pi_h$ is the weakly symmetric elliptic projection in Lemma \ref{weak-sym-elliptic} and $P_h$, $P_h'$ are the orthogonal $L^2$ projections onto $V_h$ and $K_h$, respectively. We decompose the semidiscrete errors $(\es , e_v , e_r)$ into
\begin{align}
\label{ed-err-decomp1} \es &= \es^P + \es^h := (\sigma - \tilde \Pi_h \sigma) + (\tilde \Pi_h \sigma - \sigma_h) ,\\
\label{ed-err-decomp2} e_v &= e_v^P + e_v^h := (v-P_h v) + (P_h v - v_h) ,\\
\label{ed-err-decomp3} e_r &= e_r^P + e_r^h := (r - P_h' r) + (P_h' r - r_h).
\end{align}
We call the $e^P$ terms the projection errors and the $e^h$ terms the approximation errors, respectively.
We shall prove Theorem \ref{ed-semidiscrete-thm} by bounding the projection errors in section~\ref{semi-proj-estm} and the approximation errors in section \ref{semi-appr-estm}.
First, we remark that
\begin{align} \label{ed-proj-err-prop}
\begin{split}
(\div \tau, e_v^P)  &= 0, \qquad \tau \in M_h, \\
(\div e_\sigma^P, w) &= 0, \qquad w \in V_h,
\end{split}
\end{align}
as follows from {\bf (A0)} in section~\ref{section:mfem} and \eqref{es-weaksym-proj1}.

\subsection{Projection error estimates for the AFW and CGG elements} \label{semi-proj-estm}
A priori estimates of the $L^\infty L^2$ norms of the projection errors follow from the approximability of $M_h \times V_h \times K_h$.  
\begin{thm} \label{ed-proj-thm} There exists a constant $c>0$ such that  
\begin{align}
 \label{ed-proj-err1} \| \es^P \| &\leq ch^m \| \sigma \|_{H^m} , \quad   1 \leq m \leq k, \\
\label{ed-proj-err2} \| e_v^P  \| &\leq c h^m \| v \|_{H^m}  , \quad   0 \leq m \leq k,  \\
\label{ed-proj-err3} \| e_r^P  \| &\leq c h^m \| r \|_{H^m}  , \quad  0 \leq m \leq k ,
\end{align}
at each time $t\in[0,T_0]$.
Furthermore, similar inequalities hold with $\sigma$, $v$, and $r$, replaced by their time derivatives.
\end{thm}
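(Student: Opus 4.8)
The plan is to estimate the three projection errors separately, each by a direct appeal to standard approximation theory together with the properties of $\tilde\Pi_h$ already recorded in Lemma~\ref{weak-sym-elliptic}, and then to obtain the bounds for the time derivatives by commuting $\partial/\partial t$ with the (time-independent) projection operators.

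First I would handle $e_v^P = v - P_h v$. Since $V_h$ contains all vector fields that are piecewise polynomial of degree at most $k-1$ and $P_h$ is the $L^2(\Omega;\V)$-orthogonal projection onto $V_h$, the standard local approximation estimate (Bramble--Hilbert on each simplex, combined with the usual scaling argument) gives $\|v - P_h v\| \le ch^m\|v\|_{H^m}$ for $0\le m\le k$; for $m=0$ this is simply the contraction property $\|P_h v\|\le\|v\|$. The identical argument applies to $e_r^P = r - P_h' r$, since for the AFW and CGG elements $K_h$ is likewise the space of piecewise $\mathcal P_{k-1}$ fields and $P_h'$ is the corresponding $L^2$ projection; this gives \eqref{ed-proj-err2} and \eqref{ed-proj-err3}.

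Next I would treat the stress projection error $e_\sigma^P = \sigma - \tilde\Pi_h\sigma$. The second inequality in \eqref{es-weaksym-proj2} of Lemma~\ref{weak-sym-elliptic} reduces this to the interpolation error, $\|\sigma - \tilde\Pi_h\sigma\| \le c\|\sigma - \Pi_h\sigma\|$ for $\sigma\in H^1(\Omega;\M)$, and then the element-by-element estimate \eqref{piest} (valid for $1\le m\le k$ in the CGG case and for $1\le m\le k+1$ in the AFW case, hence in particular for $1\le m\le k$) yields $\|e_\sigma^P\| \le ch^m\|\sigma\|_{H^m}$, which is \eqref{ed-proj-err1}. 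The restriction $m\ge1$ here is genuine, since both $\Pi_h$ and the bound of Lemma~\ref{weak-sym-elliptic} require $\sigma\in H^1$.

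Finally, because $\tilde\Pi_h$, $P_h$ and $P_h'$ are bounded linear operators that do not depend on $t$, they commute with $\partial/\partial t$, so $\partial_t(\sigma - \tilde\Pi_h\sigma) = \dot\sigma - \tilde\Pi_h\dot\sigma$ and similarly for the other two errors; applying the bounds just proved to $\dot\sigma$, $\dot v$, $\dot r$ in place of $\sigma$, $v$, $r$ gives the estimates for the time derivatives, and all of these hold pointwise in $t\in[0,T_0]$. I do not expect any real obstacle here: the proof is essentially bookkeeping, and the only point demanding a moment's care is that the exponent $m$ in \eqref{ed-proj-err1} is capped at $k$ (to be compatible with the approximability of $V_h$ and $K_h$ used later in Theorem~\ref{ed-semidiscrete-thm}) even though the stress interpolant alone would permit $m\le k+1$ for the AFW element.
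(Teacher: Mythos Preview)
Your proposal is correct and follows essentially the same approach as the paper: bound $e_v^P$ and $e_r^P$ by standard $L^2$-projection approximation estimates, bound $e_\sigma^P$ by combining the second inequality of Lemma~\ref{weak-sym-elliptic} with \eqref{piest}, and obtain the time-derivative bounds by commuting the time-independent projections with $\partial/\partial t$. The only difference is the order in which you treat the three errors, which is immaterial.
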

\begin{proof} For any $t \in [0,T_0]$ and $1\leq m \leq k$, by \eqref{es-weaksym-proj2} and \eqref{piest}, we have
\begin{align*}
\| \es^P (t) \| = \| \sigma(t) - \tilde \Pi_h \sigma (t)\| \leq c\| \sigma(t) - \Pi_h \sigma (t)\| \leq ch^m \| \sigma(t) \|_{m},
\end{align*}
and \eqref{ed-proj-err1} is proved. Similarly, from definitions of $e_v^P$ and $e_r^P$, we have 
\begin{align*}
\| e_v^P (t) \| \leq ch^m \| v (t) \|_{m}, \qquad \| e_r^P (t) \| \leq ch^m \| r(t) \|_{m}, 
\end{align*}
for any $t \in [0,T_0]$, $0 \leq m \leq k$. The same argument applies to time derivatives of the projection errors because the projections
$\tilde \Pi_h$, $P_h$, $P_h'$ commute with time differentiation.
\end{proof}

\subsection{Approximation error estimates for the AFW and CGG elements} \label{semi-appr-estm}
Now we estimate the $L^\infty L^2$ norms of the approximation errors.
\begin{thm} \label{ed-approx-thm} For $1 \leq m \leq k$, 
\begin{align}
 \| \es^h, e_v^h, e_r^h \|_{L^\infty L^2} \leq ch^m \| \sigma, v, r \|_{W^{1,1} H^m},
\end{align}
where $c$ depends on $\rho_0$, $\rho_1$, and $A$.
\end{thm}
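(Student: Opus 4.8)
The plan is to run an energy argument on the approximation-error equations, paralleling the uniqueness proof for the continuous problem in Section~\ref{section:weak-form}, and then to recover the rotation error $e_r^h$ separately by means of the inf-sup condition {\bf (A1)}. Inserting the splittings (\ref{ed-err-decomp1}--\ref{ed-err-decomp3}) into the error equations (\ref{ed-err-eq1}--\ref{ed-err-eq3}) and using the relations \eqref{ed-proj-err-prop} to discard the cross terms $(\div\tau,e_v^P)$ and $(\div e_\sigma^P,w)$, one obtains a linear system for $(e_\sigma^h,e_v^h,e_r^h)$ in $M_h\times V_h\times K_h$ whose forcing consists of $(A\dot e_\sigma^P,\tau)$, $(\dot e_r^P,\tau)$, and $(\rho\dot e_v^P,w)$. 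By Theorem~\ref{ed-proj-thm} applied to the time derivatives, each of $\|\dot e_\sigma^P\|,\|\dot e_r^P\|,\|\dot e_v^P\|$ is bounded by $ch^m$ times an $H^m$-norm of $\dot\sigma$, $\dot r$, or $\dot v$; note that, $\rho$ being possibly variable, the term $(\rho\dot e_v^P,w)$ need not vanish, which is exactly why $W^{1,1}$-in-time regularity of $v$ is needed.

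The next ingredient is the orthogonality $(e_\sigma^h,q)=0$ for every $q\in K_h$ and every $t$: \eqref{es-weaksym-proj1} gives $(\tilde\Pi_h\sigma,q)=(\sigma,q)$, the third error equation \eqref{ed-err-eq3} gives $\tfrac d{dt}(e_\sigma,q)=0$, and the chosen initial data make $(e_\sigma(0),q)=0$ (the datum $\sigma_0=C\e(u_0)$ is symmetric and $\sigma_{h0}$ satisfies \eqref{did3}); subtracting gives the claim. Testing the first equation of the system with $\tau=e_\sigma^h$ and the second with $w=e_v^h$ and adding, the matching terms $\pm(\div e_\sigma^h,e_v^h)$ cancel and $(\dot e_r^h,e_\sigma^h)$ drops out since $\dot e_r^h\in K_h$, leaving
\begin{equation*}
\frac12\,\frac{d}{dt}\Bigl(\|e_\sigma^h\|_A^2+\|e_v^h\|_\rho^2\Bigr)
= -(A\dot e_\sigma^P,e_\sigma^h)-(\dot e_r^P,e_\sigma^h)-(\rho\dot e_v^P,e_v^h).
\end{equation*}
Estimating the right-hand side by Cauchy--Schwarz (using the boundedness of $A$ and $\rho\le\rho_1$) and dividing by $E(t):=\bigl(\|e_\sigma^h\|_A^2+\|e_v^h\|_\rho^2\bigr)^{1/2}$ gives $\tfrac d{dt}E\le c\bigl(\|\dot e_\sigma^P\|+\|\dot e_r^P\|+\|\dot e_v^P\|\bigr)$; since the right-hand side does not involve $E$, no Gr\"onwall factor is needed and integration yields $E(t)\le E(0)+ch^m\|\dot\sigma,\dot v,\dot r\|_{L^1H^m}$. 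Here $e_v^h(0)=P_h v_0-v_{h0}=0$ by \eqref{did0}, and $\|e_\sigma^h(0)\|\le\|e_\sigma(0)\|+\|\sigma_0-\tilde\Pi_h\sigma_0\|\le ch^m\|\sigma_0,r_0\|_m$ by \eqref{ed-compatible-ic2} and Theorem~\ref{ed-proj-thm}; using that the $A$- and $\rho$-weighted norms are equivalent to $\|\cdot\|$ and that $W^{1,1}([0,T_0];H^m)\hookrightarrow C^0([0,T_0];H^m)$ (so that $\|\sigma_0,r_0\|_m$ and $\|\dot\sigma,\dot v,\dot r\|_{L^1H^m}$ are both controlled by $\|\sigma,v,r\|_{W^{1,1}H^m}$), this gives the asserted bound on $\|e_\sigma^h,e_v^h\|_{L^\infty L^2}$.

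It remains to bound $e_r^h$, and this is the main obstacle, since the energy method never sees $e_r^h$ directly. The device is to integrate the first error equation (\ref{ed-err-eq1}) in time against a \emph{fixed} $\tau\in M_h$, which gives $(Ae_\sigma(t)-Ae_\sigma(0),\tau)+\int_0^t(\div\tau,e_v)\,ds+(e_r(t)-e_r(0),\tau)=0$. For fixed $t$, apply {\bf (A1)} to the pair $(0,e_r^h(t))\in V_h\times K_h$ to produce $\tau\in M_h$ with $\div\tau=0$, $(\tau,q)=(e_r^h(t),q)$ for all $q\in K_h$, and $\|\tau\|_{\div}\le c\|e_r^h(t)\|$. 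Then $\div\tau=0$ annihilates the velocity integral outright, $(\tau,e_r^h(t))=\|e_r^h(t)\|^2$ since $e_r^h(t)\in K_h$, and splitting $e_r(t)=e_r^P(t)+e_r^h(t)$ and $e_\sigma(t)=e_\sigma^P(t)+e_\sigma^h(t)$ leaves $\|e_r^h(t)\|^2$ equal to a sum of inner products of $\tau$ against $e_r(0),\,e_\sigma(0),\,e_r^P(t),\,e_\sigma^P(t),\,e_\sigma^h(t)$; Cauchy--Schwarz together with $\|\tau\|\le c\|e_r^h(t)\|$ then bounds $\|e_r^h(t)\|$ by a constant times the sum of those five norms. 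Each of them is already known to be $O(h^m)$ times $\|\sigma,v,r\|_{W^{1,1}H^m}$ --- from Theorem~\ref{ed-proj-thm} and the embedding above for the projection and pointwise terms, from \eqref{ed-compatible-ic2} for the initial terms, and from the energy estimate just proved for $e_\sigma^h(t)$ --- so $\|e_r^h\|_{L^\infty L^2}\le ch^m\|\sigma,v,r\|_{W^{1,1}H^m}$, completing the argument. Besides this last step, the one delicate point is the orthogonality $(e_\sigma^h,q)=0$, which (as in the continuous uniqueness proof) is what lets the rotation drop out of the energy balance and which rests on the compatible choice of initial data.
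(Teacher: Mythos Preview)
Your argument is correct and follows essentially the same approach as the paper: the energy identity for $(e_\sigma^h,e_v^h)$, the orthogonality $e_\sigma^h\perp K_h$, and the recovery of $e_r^h$ via a divergence-free test function from {\bf (A1)} are exactly the paper's ingredients. The only minor difference is that the paper first observes, from \eqref{did1}, that $(Ae_\sigma(0),\tau)+(e_r(0),\tau)=0$ for divergence-free $\tau\in M_h$, so after integrating \eqref{ed-err-eq1} the initial terms cancel and one obtains the cleaner bound $\|e_r^h\|\le c\|e_\sigma^h,e_\sigma^P,e_r^P\|$; you instead retain the initial contributions and bound them separately via \eqref{ed-compatible-ic2}, which is equally valid.
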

\begin{proof} The proof is based on two estimates: 
\begin{align} \label{ed-approx-estm1}
\| \es^h , e_v^h \|_{L^\infty L^2} &\leq ch^m (\| \sigma_0, r_0 \|_m + \| \dot \sigma, \dot v, \dot r \|_{L^1 H^m}), \\
\label{ed-approx-estm2} \| e_r^h \|_{L^\infty L^2} &\leq c\| e_\sigma^h, e_\sigma^P, e_r^P \|_{L^\infty L^2}, 
\end{align}
for $1 \leq m \leq k$. Theorem \ref{ed-approx-thm} follows from these estimates
and Theorem~\ref{ed-proj-thm}, since $\| \sigma, r \|_{L^\infty H^m} \leq c \| \sigma, r \|_{W^{1,1}H^m}$ by Sobolev embedding.

To prove (\ref{ed-approx-estm1}--\ref{ed-approx-estm2}), we first remark that $\sigma_{h0} \perp K_h$ from \eqref{did3} and $\tilde \Pi_h \sigma_0 \perp K_h$ from the definition of $\tilde\Pi_h$, and so $e_\sigma^h(0) \perp K_h$. Similarly, $\dot\sigma_h\perp K_h$
from \eqref{ed-semidiscrete-eq3} and $\tilde \Pi_h\dot\sigma\perp K_h$ from the definition
of $\tilde\Pi_h$ and its commutativity with time differentiation, so $\dot e_\sigma^h\perp K_h$. Combining these facts, we deduce that $e_\sigma^h\perp K_h$, as well.
To show \eqref{ed-approx-estm1}, we rewrite (\ref{ed-err-eq1}--\ref{ed-err-eq2}), using the notations in (\ref{ed-err-decomp1}--\ref{ed-err-decomp3}) and the reductions in \eqref{ed-proj-err-prop}, as 
\begin{align}
\label{ed-semi-eq1} (A \dot e_\sigma^h, \tau) + (\div \tau, e_v^h) + (\dot e_r^h, \tau) &= - (A \dot e_\sigma^P, \tau) - (\dot e_r^P, \tau) , & \tau \in M_h, \\
\label{ed-semi-eq2} (\rho \dot e_v^h , w ) - (\div \es^h , w) &= - (\rho \dot e_v^P , w )  , & w \in V_h .
\end{align}
We take $\tau = \es^h$, $w = e_v^h$ in the above two equations, add them, and use the fact $\es^h \perp \dot e_r^h$ from $\dot e_r^h \in K_h$, obtaining
\begin{align}  \label{ed-evol-eq0}
\half \frac{d}{dt} \| \es^h \|_A^2 + \frac 12 \frac{d}{dt} \| e_v^h \|_{\rho}^2 = - (A \dot e_\sigma^P, \es^h ) - (\dot e_r^P, \es^h ) - (\rho \dot e_v^P, e_v^h ).
\end{align}
Bounding the right-hand side of this inequality using the Cauchy--Schwarz inequality and
the bounds on $A$ and $\rho$, we get
\begin{align}
\label{ed-evol-eq1}\half \frac{d}{dt} (\| \es^h \|_A^2 + \| e_v^h \|_{\rho}^2) \leq  c \| \dot e_\sigma^P, \dot e_r^P, \dot e_v^P \| \,(\| \es^h \|_A^2 + \| e_v^h \|_{\rho}^2)^\half.
\end{align}
Dividing both sides by $(\| \es^h \|_A^2 + \| e_v^h \|_{\rho}^2)^{1/2}$ and integrating in time on $[0,t]$, then
\begin{align}  \label{ed-evol-eq2}
\left ( \| \es^h (t) \|_A^2 + \| e_v^h (t)  \|_\rho^2 \right)^\half \leq \left (\| \es^h(0) \|_A^2 + \| e_v^h(0) \|_\rho^2 \right)^\half + c\int_0^t \| \dot e_\sigma^P, \dot e_r^P, \dot e_v^P \| \,ds .
\end{align}
Since $A$ is coercive and $\rho$ has a positive lower bound, in order to establish \eqref{ed-approx-estm1}, it suffices to show that the right-hand side of \eqref{ed-evol-eq2} is bounded by $ch^m (\| \sigma_0, r_0 \|_m + \| \dot \sigma, \dot v, \dot r \|_{L^1 H^m})$. For the integral term, this follows directly from Theorem~\ref{ed-proj-thm}. We also have that $e_v^h (0) = 0$, from the choice of $v_{h0}$. Finally, we use the boundedness of $A$ and $\rho$, the triangle inequality, \eqref{ed-compatible-ic2}, \eqref{es-weaksym-proj2}, \eqref{piest}, to get
\begin{align} \label{sigma0-estm}
\| \es^h(0)\|_A &\leq c (\|\sigma_{h0} - \sigma_0\| + \|\sigma_0 - \tilde \Pi_h \sigma_0\|) \leq ch^m \| \sigma_0, r_0 \|_m, 
\end{align}
for $1 \leq m \leq k$.  This completes the proof of \eqref{ed-approx-estm1}.

To complete the proof of the theorem, we now verify \eqref{ed-approx-estm2}.  Since
$\sigma=C\epsilon(u)$ and $r=\skw\grad u$, we have
\begin{align*}
(A \sigma, \tau) + (u, \div \tau) + (r, \tau) = 0, \qquad \tau \in M,
\end{align*}
and therefore $(A \sigma, \tau) + (r , \tau) = 0$ for $\tau\in M$ divergence-free.
Similarly, from \eqref{did1} we have $(A \sigma_{h0}, \tau) + (r_{h0}, \tau) = 0$
for $\tau\in M_h$ divergence-free.  Subtracting, we see that $(A e_\sigma(0) , \tau) + (e_r(0) , \tau) = 0$ for such $\tau$.  Next, we may take a divergence-free $\tau$
in \eqref{ed-err-eq1} for $\tau \in M_h$ to find that $(A\dot e_\sigma, \tau) + (\dot e_r, \tau)$ vanishes as well.  Combining, we conclude that
\begin{align*}
(A e_\sigma, \tau) + (e_r , \tau) = 0,\quad \tau \in M_h, \; \div \tau = 0,
\end{align*}
or, equivalently,
\begin{align*}
(e_r^h, \tau) = - (A (e_\sigma^h+ e_\sigma^P) , \tau) + (e_r^P,\tau), \quad \tau \in M_h, \; \div \tau = 0,
\end{align*}
for all $t\in [0,T_0]$.  Now fix $t$, and choose $\tau \in M_h$ such that $\div \tau = 0$, $(\tau, e_r^h (t)) = \| e_r^h (t) \|^2$, and $\| \tau \| \leq c \| e_r^h(t) \|$, which is possible by the stability condition {\bf (A1)} in section~\ref{section:stationary}.  It follows that $\| e_r^h(t) \| \leq c \| e_\sigma^h (t), e_\sigma^P(t), e_r^P (t) \|$, from which \eqref{ed-approx-estm2} follows.
\end{proof}
Combining Theorems \ref{ed-proj-thm} and \ref{ed-approx-thm}, we complete the proof of the Theorem \ref{ed-semidiscrete-thm}, the error estimates for the semidiscrete solutions.

\subsection{Robustness for nearly incompressible materials} \label{ed-robust}

Throughout this section, we assume that the elastic medium is homogeneous and isotropic, i.e., the compliance tensor $A$ has the form \eqref{compliance} with Lam\'e coefficients $\mu$ and $\lambda$ which are constant. We continue to consider homogeneous displacement boundary conditions. In nearly incompressible elastic materials, $\lambda$ is very large, and, in the incompressible limit, $\lambda = +\infty$. Many standard discretization of elasticity suffer from locking, which means that the errors, while they decay with the mesh size, grow as $\lambda$ increases. A robust or locking-free method is one in which the error estimates hold uniformly as $\lambda \ra +\infty$. In contrast to many
displacement methods, mixed methods for stationary elasticity problems are typically locking-free (see \cite{ADG84, BFBook}).
In this section, we show that our mixed method for linear elastodynamics is likewise free of locking. Again, we focus on semidiscretization in space, which is the essential aspect. For an analysis taking into account temporal discretization, we refer \cite{LeeThesis}.

We require the following lemmas, proved in \cite{ADG84}. Let $\tau^D := \tau - (1/n) \tr(\tau) I$ denote the deviatoric part of $\tau$ in $L^2(\Omega; \M)$.
\begin{lemma} \label{devi-lemma1}
Let $\tau \in M$ satisfy $\int_{\Omega} \tr(\tau) \, dx = 0$. Then the estimate  
\begin{align}
\label{devi-estm1} \| \tau \| \leq c (\| \tau^D \| + \| \div \tau \|_{-1} ), 
\end{align}
holds with $c >0$ independent of $\tau$. 
\end{lemma}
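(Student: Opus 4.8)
The plan is to prove this by a compactness (Peetre--Tartar) argument, which is the standard route for such "deviatoric" estimates. I would first recall that, on the space of matrix fields $\tau \in L^2(\Omega;\M)$ with $\int_\Omega \tr\tau\,dx = 0$, one has the Nečas-type inequality
\begin{align*}
\|\tau\| \leq c\left(\|\tau^D\| + |\tr\tau|_{H^{-1}/\R}\right) \quad\text{or, more simply,}\quad \|\tr\tau\| \leq c\left(\|\tau^D\| + \|\tau\|_{-1}\right),
\end{align*}
but in fact the cleanest path is a direct contradiction argument. Suppose the estimate \eqref{devi-estm1} fails. Then there is a sequence $\tau_j \in M$ with $\int_\Omega \tr\tau_j\,dx = 0$, $\|\tau_j\| = 1$, and $\|\tau_j^D\| + \|\div\tau_j\|_{-1} \to 0$. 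Since $\{\tau_j\}$ is bounded in $L^2(\Omega;\M)$, after passing to a subsequence it converges weakly in $L^2$ to some $\tau$. The key compactness input is that the sequence is bounded in $H(\div)$ only in a weak sense — $\div\tau_j\to 0$ in $H^{-1}$ — so I cannot directly use the Rellich compactness of $H(\div)$. Instead I would argue as follows.

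From $\|\tau_j^D\|\to 0$ we get that the weak limit $\tau$ satisfies $\tau^D = 0$, i.e. $\tau = \phi I$ for some scalar $\phi \in L^2(\Omega)$ with $\int_\Omega \phi\,dx = 0$ (the mean-zero condition passes to the limit because $\tau_j \rightharpoonup \tau$ weakly in $L^2$ and constants are test functions). Next, from $\div\tau_j \to 0$ in $H^{-1}(\Omega;\V)$ we obtain, passing to the weak limit, that $\div\tau = \grad\phi = 0$ in $H^{-1}(\Omega;\V)$ — here $\div(\phi I) = \grad\phi$. Hence $\phi$ is a constant, and combined with $\int_\Omega\phi\,dx = 0$ this forces $\phi \equiv 0$, so $\tau \equiv 0$. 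To reach the contradiction with $\|\tau_j\| = 1$, I need the convergence $\tau_j \to \tau$ to be \emph{strong} in $L^2$. This is where I would invoke the following compactness fact: if $\tau_j \rightharpoonup \tau$ in $L^2(\Omega;\M)$, $\tau_j^D \to \tau^D$ strongly in $L^2$, and $\div\tau_j \to \div\tau$ strongly in $H^{-1}$, then $\tau_j\to\tau$ strongly in $L^2$. This in turn rests on the Nečas inequality / the $L^2$-regularity of the divergence operator (equivalently, that $\|\tr\tau_j - \tr\tau\| \le c(\|\tau_j^D - \tau^D\| + \|\div(\tau_j - \tau)\|_{-1})$ — applying the very inequality we want to the differences, but now the right-hand side is known to go to zero and one needs only this linear estimate, not the failed contradiction hypothesis). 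The honest way to set this up without circularity is to establish the linear inequality $\|\tau\| \leq c(\|\tau^D\| + \|\div\tau\|_{-1})$ directly from the Nečas lemma (the inequality $\|q\| \le c(\|q\|_{-1} + \|\grad q\|_{-1})$ for $q\in L^2(\Omega)/\R$, applied to $q = \tr\tau$), for which one writes $\div\tau = \div\tau^D + \tfrac1n\grad\tr\tau$, hence $\grad\tr\tau = n(\div\tau - \div\tau^D)$, so $\|\grad\tr\tau\|_{-1} \le n(\|\div\tau\|_{-1} + \|\tau^D\|)$, and then $\|\tr\tau\|$ is controlled by Nečas, whence $\|\tau\| \le \|\tau^D\| + \tfrac1{\sqrt n}\|\tr\tau\|$ closes the estimate.

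Given that chain, the cleanest presentation is actually to skip the contradiction argument entirely: combine $\div\tau = \div\tau^D + \frac1n\grad(\tr\tau)$ with the Nečas inequality on $\Omega$ (which requires $\Omega$ to be a bounded Lipschitz — here smooth — domain, and is applied to the mean-zero function $\tr\tau$) to get $\|\tr\tau\| \le c\,\|\grad\tr\tau\|_{-1} \le c(\|\div\tau\|_{-1} + \|\tau^D\|)$, and then bound $\|\tau\|^2 = \|\tau^D\|^2 + \frac1n\|\tr\tau\|^2$. I expect the main obstacle to be simply invoking the Nečas inequality cleanly with the right hypotheses on $\Omega$ and on the mean-zero normalization of $\tr\tau$; this is classical (and, as the excerpt notes, the lemma is taken from \cite{ADG84}), so in the write-up I would either cite \cite{ADG84} directly for \eqref{devi-estm1} or give the two-line reduction to Nečas sketched above.
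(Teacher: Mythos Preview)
The paper does not give a proof of this lemma at all; it simply cites \cite{ADG84}. Your final, direct argument via the Ne\v{c}as inequality is correct and is in fact the standard proof (essentially the one in \cite{ADG84}): from $\tau = \tau^D + \tfrac{1}{n}(\tr\tau)I$ one gets $\grad(\tr\tau) = n(\div\tau - \div\tau^D)$, hence $\|\grad\tr\tau\|_{-1} \le n(\|\div\tau\|_{-1} + \|\tau^D\|)$; the Ne\v{c}as inequality for the mean-zero scalar $q = \tr\tau$ then controls $\|\tr\tau\|$, and the orthogonal decomposition $\|\tau\|^2 = \|\tau^D\|^2 + \tfrac{1}{n}\|\tr\tau\|^2$ finishes it. You were right to abandon the compactness route, which as you noticed was circular (the strong convergence step invoked precisely the inequality you were trying to prove). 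For the write-up, either cite \cite{ADG84} as the paper does, or give the three-line Ne\v{c}as reduction; both are fine.
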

\begin{lemma} \label{devi-lemma2}
For $\tau \in L^2(\Omega; \M)$ and $A$ of the form in \eqref{compliance}, the inequality
\begin{align}
\label{devi-estm2} \| \tau^D \|^2 \leq c\| \tau \|_A^2 , 
\end{align}
holds with $c$ depending only on $\mu$ and $n$. 
\end{lemma}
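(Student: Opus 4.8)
The plan is to exploit the orthogonal splitting of a matrix field into its trace-free (deviatoric) and spherical parts, together with the block-diagonal structure of $A$ both with respect to this splitting and with respect to $\M = \mathbb S \oplus \K$. The key quantitative fact will be that the spherical part contributes nonnegatively to $\|\tau\|_A^2$ for \emph{every} $\lambda > 0$, which is precisely what makes the constant independent of $\lambda$.

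First I would reduce to symmetric tensors. Writing $\tau = \sym\tau + \skw\tau$, one has $\tau^D = (\sym\tau)^D + \skw\tau$ with the two summands in $\mathbb S$ and $\K$ respectively, so $\|\tau^D\|^2 = \|(\sym\tau)^D\|^2 + \|\skw\tau\|^2$. Since $A$ maps $\mathbb S$ into $\mathbb S$ (as the compliance tensor of \eqref{compliance}) and equals a positive multiple $aI$ on $\K$, the pairing splits correspondingly: $\|\tau\|_A^2 = \|\sym\tau\|_A^2 + a\|\skw\tau\|^2$. The skew contribution is controlled trivially, $\|\skw\tau\|^2 = a^{-1}(A\skw\tau,\skw\tau)$, so it suffices to prove $\|\tau^D\|^2 \le 2\mu\,\|\tau\|_A^2$ for \emph{symmetric} $\tau$.

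For symmetric $\tau$, decompose $\tau = \tau^D + \tfrac1n \tr(\tau)I$. A pointwise computation from \eqref{compliance} shows $A\tau^D = \tfrac1{2\mu}\tau^D$ (using $\tr\tau^D = 0$), while $A$ sends spherical tensors to spherical tensors with the factor $(1-n\theta)/(2\mu)$, where $\theta := \lambda/(2\mu+n\lambda)$ and $1 - n\theta = 2\mu/(2\mu+n\lambda) > 0$ since $\mu > 0$. Hence the deviatoric and spherical parts are orthogonal in both the $L^2$ and the $A$-inner products, and
\[
\|\tau\|_A^2 \;=\; \frac{1}{2\mu}\|\tau^D\|^2 + \frac{1-n\theta}{2\mu n}\int_\Omega (\tr\tau)^2\,dx \;\ge\; \frac{1}{2\mu}\|\tau^D\|^2 .
\]
Combining with the skew estimate yields $\|\tau^D\|^2 \le \max(2\mu, a^{-1})\,\|\tau\|_A^2$; with the normalization $a=1$ this is $\|\tau^D\|^2 \le \max(2\mu,1)\,\|\tau\|_A^2$, a constant depending only on $\mu$ (hence, a fortiori, only on $\mu$ and $n$).

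There is no genuine obstacle here: the argument is a short pointwise computation. The only things requiring care are the bookkeeping with traces and the observation that all the relevant decompositions ($\mathbb S \oplus \K$ and deviatoric $\oplus$ spherical) are orthogonal for the $A$-inner product, not merely for $L^2$. It is worth stressing that the estimate is uniform as $\lambda \to +\infty$ precisely because $\theta \le 1/n$ for all $\lambda > 0$ — equivalently, because $\mu$ is bounded below — and this uniformity is exactly what will be needed in section~\ref{ed-robust}. An equivalent route, avoiding the orthogonal splitting, is to write $(A\tau,\tau) = \tfrac1{2\mu}\bigl(\|\tau\|^2 - \theta\int_\Omega(\tr\tau)^2\,dx\bigr)$ and $\|\tau^D\|^2 = \|\tau\|^2 - \tfrac1n\int_\Omega(\tr\tau)^2\,dx$ for symmetric $\tau$, and compare the two term by term using $\theta \le 1/n$.
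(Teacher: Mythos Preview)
Your argument is correct. The paper does not give its own proof of this lemma; it merely cites \cite{ADG84} (``We require the following lemmas, proved in \cite{ADG84}''), so there is no in-paper proof to compare against. Your pointwise computation is the standard one: the key identity $\|\tau\|_A^2 = \tfrac{1}{2\mu}\|\tau^D\|^2 + \tfrac{1-n\theta}{2\mu n}\int_\Omega(\tr\tau)^2\,dx$ for symmetric $\tau$, with $1-n\theta = 2\mu/(2\mu+n\lambda)>0$, immediately gives $\|\tau^D\|^2 \le 2\mu\,\|\tau\|_A^2$ uniformly in $\lambda$, and your handling of the skew part via the extension of $A$ to $\K$ is exactly what the paper's setup in section~\ref{section:stationary} allows. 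The explicit constant $\max(2\mu,1)$ (for the identity extension on $\K$) is a small bonus over the bare existence statement.
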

\begin{thm} Let $M_h \times V_h \times K_h$ be one of the elements in {\rm Table \ref{mixed-approx}} of order $k \geq 1$ and assume that $A$ has the form of \eqref{compliance} with $\mu$ and $\lambda$ constant. We assume that the exact solution $\sigma$, $v$, and $r$ belong to $W^{2,1}H^k$. Then there exist a constant $c>0$ independent of $\lambda$ such that
\begin{align} 
\label{locking-free-estm1} \| v - v_h \|_{L^\infty L^2}  &\leq ch^k \| \sigma, v, r \|_{W^{1,1} H^k}, \\
\label{locking-free-estm2} \| \sigma - \sigma_h \|_{L^\infty L^2} &\leq ch^k \| \sigma, v, r \|_{W^{2,1} H^k}.
\end{align}
\end{thm}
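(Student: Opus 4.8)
The plan is to rerun the proofs of Theorems~\ref{ed-approx-thm} and \ref{ed-semidiscrete-thm}, keeping careful track of which constants depend on $\lambda$, and to show that the only genuine source of $\lambda$-dependence can be removed with the help of Lemmas~\ref{devi-lemma1} and \ref{devi-lemma2}. The starting observation is that for the explicit form \eqref{compliance} the \emph{upper} bound of $A$ stays bounded as $\lambda\to\infty$ (indeed $A\tau\to(2\mu)^{-1}\tau^D$), so $\|\cdot\|_A\le C(\mu)\|\cdot\|$ with $C(\mu)$ independent of $\lambda$; what degenerates is only the lower bound, i.e.\ the coercivity of $A$ on all of $L^2(\Omega;\M)$. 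Consequently an energy estimate phrased in the $A$-norm is automatically robust \emph{as an $A$-norm estimate}, and the work splits into (i) producing such an estimate without ever invoking coercivity of $A$, and (ii) recovering an $L^2$ bound for the stress error from an $A$-norm bound.

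For (i), inspect the energy identity \eqref{ed-evol-eq0}. Of its three right-hand terms, $(A\dot e_\sigma^P,\es^h)\le\|\dot e_\sigma^P\|_A\|\es^h\|_A\le C(\mu)\|\dot e_\sigma^P\|\,\|\es^h\|_A$ and $(\rho\dot e_v^P,e_v^h)\le\|\dot e_v^P\|_\rho\|e_v^h\|_\rho$ are already robust; the only term whose earlier treatment used coercivity of $A$ is the rotation coupling $(\dot e_r^P,\es^h)$. But $\dot e_r^P$ is skew-symmetric, hence trace-free, so $(\dot e_r^P,\es^h)=(\dot e_r^P,(\es^h)^D)$, and Lemma~\ref{devi-lemma2} gives $\|(\es^h)^D\|\le c\|\es^h\|_A$ with $c=c(\mu,n)$. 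Thus \eqref{ed-evol-eq2} holds with $c$ depending only on $\mu$, $n$, $\rho_0$, $\rho_1$. Since $\tilde\Pi_h$, $P_h$, $P_h'$ are defined via the plain $L^2$ inner product---with no reference to $A$---the projection bounds of Theorem~\ref{ed-proj-thm} are $\lambda$-independent; and a short computation using the equations at $t=0$, the choice $v_{h0}=P_h v_0$, and \eqref{did1}--\eqref{did3} shows $e_v^h(0)=0$ and (testing against $\tau=\es^h(0)$, which turns out to be divergence-free and orthogonal to $K_h$) $\|\es^h(0)\|_A\le\|\es^P(0)\|_A+c\|e_r^P(0)\|\le ch^k\|\sigma_0,r_0\|_k$, robustly. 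Hence $\sup_t\|e_v^h(t)\|$ and $\sup_t\|\es^h(t)\|_A$ are both bounded by $ch^k\|\sigma,v,r\|_{W^{1,1}H^k}$ with $c$ independent of $\lambda$; adding the robust projection bound for $e_v^P$ yields \eqref{locking-free-estm1}.

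For (ii) we still need $\|\es^h\|$ in $L^2$. The field $\es^h$ has trace of zero mean, because $\tr\sigma=(2\mu+n\lambda)\div u$ integrates to zero by the displacement boundary condition, while testing \eqref{es-weaksym-proj-eq1} and \eqref{ed-semidiscrete-eq1} against the constant $\tau=I$ gives $\int_\Omega\tr\tilde\Pi_h\sigma=\int_\Omega\tr\sigma_h=\int_\Omega\tr\sigma$; so Lemma~\ref{devi-lemma1} applies, $\|\es^h\|\le c(\|(\es^h)^D\|+\|\div\es^h\|_{-1})$. The first term is controlled by $\|\es^h\|_A$ as in (i). For the second, \eqref{ed-err-eq2} and \eqref{ed-proj-err-prop} give $(\div\es^h,w)=(\rho\dot e_v,w)$ for $w\in V_h$, and since $\div\es^h\in V_h$ this yields $\|\div\es^h\|_{-1}\le\|\div\es^h\|\le\rho_1\|\dot e_v\|\le\rho_1(\|\dot e_v^P\|+\|\dot e_v^h\|)$. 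The term $\|\dot e_v^h\|$ is estimated by differentiating the error system \eqref{ed-semi-eq1}--\eqref{ed-semi-eq2} once in time and running exactly the same robustified energy argument on the differentiated system: $\dot\es^h$ is still orthogonal to $K_h$, and the initial data $\|\dot\es^h(0)\|_A$ and $\dot e_v^h(0)$ are bounded robustly by $ch^k\|\dot\sigma(0),\dot r(0),\dot v(0)\|_k$ by a computation parallel to the one for $\es^h(0)$. This is precisely where one extra time derivative is consumed, which accounts for the hypothesis $\sigma,v,r\in W^{2,1}H^k$ in \eqref{locking-free-estm2} as against $W^{1,1}H^k$ in \eqref{locking-free-estm1}. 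Assembling the pieces and adding the robust bound for $\|\es^P\|_{L^\infty L^2}$ gives \eqref{locking-free-estm2}.

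The main obstacle is conceptual rather than computational: to recognize that loss of coercivity of $A$ is the \emph{only} channel through which $\lambda$ enters the original analysis, and then to exploit the two structural facts that repair it---that the rotation-error coupling term only sees the trace-free part of $\es^h$, which the $A$-norm controls uniformly in $\lambda$ by Lemma~\ref{devi-lemma2}; and that the missing control of the full $L^2$ norm of $\es^h$ is restored, via Lemma~\ref{devi-lemma1}, at the price of bounding $\div\es^h$, for which the momentum equation \eqref{ed-err-eq2} trades one extra time derivative of the exact solution. Once these are in hand, the rest is a careful rerun of the earlier proofs with explicit bookkeeping of constants, together with the Sobolev embedding $W^{1,1}([0,T_0])\hookrightarrow C^0([0,T_0])$ already used in Theorem~\ref{ed-approx-thm} to pass from initial-time norms to the $W^{j,1}H^k$ norms on the right-hand sides.
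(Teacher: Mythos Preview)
Your proof is correct and follows the same overall strategy as the paper: a $\lambda$-robust $A$-norm energy estimate for $(e_\sigma^h,e_v^h)$, with the rotation coupling handled via the trace-free nature of $\dot e_r^P$; recovery of the $L^2$ stress bound through Lemmas~\ref{devi-lemma1}--\ref{devi-lemma2} and control of the divergence via the momentum equation; and a differentiated energy estimate to bound $\|\dot e_v^h\|$, which accounts for the extra time derivative in \eqref{locking-free-estm2}.

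Two minor differences are worth noting. First, for the term $(\dot e_r^P,e_\sigma^h)$ you extract the deviatoric part of $e_\sigma^h$ and invoke Lemma~\ref{devi-lemma2}, whereas the paper uses the equivalent observation $\dot e_r^P=A(2\mu\dot e_r^P)$ for skew fields to write the term as an $A$-inner product directly. Second, and more substantively, you apply Lemma~\ref{devi-lemma1} to $e_\sigma^h$ alone and then add the $\lambda$-independent projection bound for $e_\sigma^P$; the paper instead applies Lemma~\ref{devi-lemma1} to the full error $e_\sigma$ and must therefore estimate $\|\div e_\sigma^P\|_{-1}$ separately via orthogonality to piecewise constants and a Poincar\'e argument. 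Your route sidesteps that step entirely and is a small streamlining. Your direct argument for $\|e_\sigma^h(0)\|_A$ (testing against the divergence-free, $K_h$-orthogonal field $e_\sigma^h(0)$) is also more self-contained than the paper's appeal to \eqref{sigma0-estm}, which implicitly relies on the $\lambda$-robustness of the stationary estimate \eqref{ed-compatible-ic2}. One small caution: your claim that testing \eqref{ed-semidiscrete-eq1} with $\tau=I$ gives $\int_\Omega\tr\sigma_h=\int_\Omega\tr\sigma$ actually yields only $(A\dot\sigma_h,I)=0$, hence $\int_\Omega\tr\dot\sigma_h=0$; one must integrate in time and invoke the initial condition \eqref{did1} (which gives $\int_\Omega\tr\sigma_{h0}=0$) to conclude, exactly as the paper does for $e_\sigma$.
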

\begin{proof}
The projection error estimates in Theorem \ref{ed-proj-thm} certainly hold with a constant
$c$ independent of $\lambda$, 
because $\tilde \Pi_h$, $P_h$, $P_h'$ do not depend on $\lambda$. Furthermore, the inequality $\| e_\sigma^P \|_A \leq c \| e_\sigma^P \|$ holds uniformly in $\lambda$, since $A$ remains uniformly bounded as $\lambda \ra + \infty$. 

The proof is based on the following estimates, in which the constant
$c$ does not depend on $\lambda$: 
\begin{align} 
\label{locking-inter-estm1} (\| \es^h (t) \|_A^2 + \| e_v^h (t) \|_{\rho}^2)^\half &\leq c h^k \| \sigma, v, r \|_{W^{1,1} H^k}, \\
\label{locking-inter-estm2} \| e_\sigma (t) \| &\leq c(\| e_\sigma (t) \|_A + \| \div e_\sigma (t) \|_{-1}), \\
\label{locking-inter-estm3} \| \div e_{\sigma}(t) \|_{-1} &\leq c (\| \dot e_v (t) \| + h^k \| \sigma (t) \|_k), \\
\label{locking-inter-estm4} \| \dot e_v^h (t) \| &\leq ch^k \| \sigma, v, r \|_{W^{2,1} H^k}.
\end{align}
We first show that \eqref{locking-free-estm1} and \eqref{locking-free-estm2} follow from these estimates. The estimate \eqref{locking-free-estm1} is a consequence of \eqref{locking-inter-estm1}, the estimate on $\| e_v^P \|$ in Theorem \ref{ed-proj-thm}, and the triangle inequality. To show \eqref{locking-free-estm2}, observe that \eqref{locking-inter-estm2}, \eqref{locking-inter-estm3}, and the triangle inequality give
\begin{align*}
\| e_\sigma (t) \| \leq c( \| e_\sigma^h (t) \|_A + \| e_\sigma^P (t) \|_A + \| \dot e_v^h (t) \| + \| \dot e_v^P (t) \| + h^k \| \sigma(t) \|_k).
\end{align*}
Then \eqref{locking-free-estm2} is obtained by \eqref{locking-inter-estm1}, \eqref{locking-inter-estm4}, and Theorem \ref{ed-proj-thm}. 

To prove \eqref{locking-inter-estm1}, observe that $\tr(\dot e_r^P) = 0$ because $\dot e_r^P$ is skew-symmetric, so $\dot e_r^P = A(2 \mu \dot e_r^P)$ holds for $A$ of the form \eqref{compliance}. We may therefore rewrite \eqref{ed-evol-eq0} as
\begin{align*}
\half \frac{d}{dt} (\| \es^h \|_A^2 + \| e_v^h \|_{\rho}^2) = - (A (\dot e_\sigma^P + 2 \mu \dot e_r^P), \es^h ) - (\rho \dot e_v^P, e_v^h ),
\end{align*}
and repeating the argument in (\ref{ed-evol-eq0}--\ref{ed-evol-eq2}), we have 
\begin{multline*}
(\| \es^h (t) \|_A^2 + \| e_v^h (t) \|_{\rho}^2)^\half \\
\leq (\| \es^h (0) \|_A^2 + \| e_v^h (0) \|_{\rho}^2)^\half+
\int_0^t (\|\dot e_\sigma^P+2\mu \dot e_r^P\|_A^2 + \| \dot e_v^P\|_\rho^2)^\half\,ds.
\end{multline*}
Since $A$ is uniformly bounded in $\lambda$, \eqref{locking-inter-estm1} follows from \eqref{sigma0-estm}, $e_v^h(0) = 0$, Theorem \ref{ed-proj-thm}, and Sobolev embedding $\| \sigma, r \|_{L^\infty H^k} \leq c \| \sigma, r \|_{W^{1,1}H^k}$.

To show \eqref{locking-inter-estm2}, by Lemma \ref{devi-lemma1} and Lemma \ref{devi-lemma2}, it is enough to show that $\int_{\Omega} \tr(e_\sigma (t)) \,dx = 0$. For $\tau = I$ in \eqref{ed-err-eq1}, satisfying $\div \tau = 0$ and $(\dot e_r, \tau) = 0$ due to the skew-symmetry of $\dot e_r$, we have $(A \dot e_\sigma (t), I) = 0$ for $t \in [0,T_0]$. From \eqref{did1} we see that $(A \es (0), I) = 0$, whence $(A \es (t), I) = 0$ for all $t \in [0, T_0]$. By the form of $A$ in \eqref{compliance}, 
\begin{align*} 
\int_{\Omega} \tr(e_\sigma (t)) \,dx = (e_\sigma (t), I) = (2\mu + n\lambda) (A e_\sigma (t), I) = 0.
\end{align*}

For \eqref{locking-inter-estm3}, by the triangle inequality, 
\begin{align*}
\| \div e_\sigma (t) \|_{-1} \leq \| \div e_\sigma^h (t) \|_{-1} + \| \div e_\sigma^P (t) \|_{-1} \leq \| \div e_\sigma^h (t) \| + \| \div e_\sigma^P (t) \|_{-1}, 
\end{align*}
so we only estimate $\| \div e_\sigma^h(t) \|$ and $\| \div e_\sigma^P (t) \|_{-1}$, separately. In \eqref{ed-semi-eq2}, $\div e_\sigma^h (t) = P_h (\rho \dot e_v(t))$, so $\| \div e_\sigma^h (t) \| \leq c \| \dot e_v (t) \|$. For the estimate of $\| \div e_\sigma^P (t) \|_{-1}$ it is enough to show $\| \div e_\sigma^P (t) \|_{-1} \leq ch \| \div e_\sigma^P (t) \|$ because 
\begin{align*}
\| \div e_\sigma^P (t) \| = \| \div \sigma(t) - P_h \div \sigma(t) \| \leq ch^{k-1} \| \sigma(t) \|_{k}, \qquad k \geq 1.
\end{align*}
For $w \in \mathring{H}^1(\Omega; \V)$ let $\bar w$ denote the $L^2$-orthogonal projection of $w$ into the space of $\V$-valued piecewise constant functions associated to the triangulation $\mathcal{T}_h$. By the definition of $\| \cdot \|_{-1}$ norm and the orthogonality $\div e_\sigma^P \perp V_h$, 
\begin{align*}
\| \div e_\sigma^P (t) \|_{-1} &= \sup_{w \in \mathring{H}^1(\Omega; \V)} \frac{(\div e_\sigma^P (t) , w)}{\| w \|_1} = \sup_{w \in \mathring{H}^1(\Omega; \V)} \frac{(\div e_\sigma^P (t) , w - \bar w)}{\| w \|_1} .
\end{align*}
By the Cauchy--Schwarz and the Poincar\'{e} inequalities, 
\begin{align*}
|(\div e_\sigma^P (t) , w - \bar w)| \leq ch \| \div e_\sigma^P (t) \| \| w \|_1
\end{align*}
holds and it gives $\| \div e_\sigma^P (t) \|_{-1} \leq ch \| \div e_\sigma^P (t) \|$ with the previous identity. 

For \eqref{locking-inter-estm4} we will show a stronger result which is similar to \eqref{locking-inter-estm1} for $\dot e_\sigma^h$ and $\dot e_v^h$. If we use the energy estimate argument, presented in (\ref{ed-semi-eq1}--\ref{ed-evol-eq2}), for time derivatives of \eqref{ed-semi-eq1} and \eqref{ed-semi-eq2} with $\tau = \dot e_\sigma^h$ and $w = \dot e_v^h$, then 
\begin{multline*}
(\| \dot e_\sigma^h (t) \|_A^2 + \| \dot e_v^h (t) \|_{\rho}^2)^\half \leq (\| \dot e_\sigma^h (0) \|_A^2 + \| \dot e_v^h (0) \|_{\rho}^2)^\half+ c\int_0^t (\|\ddot e_\sigma^P, \ddot e_r^P\|_A^2 + \| \ddot e_v^P\|_\rho^2)^\half\,ds.
\end{multline*}
The integral term is handled by Theorem \ref{ed-proj-thm} with $ch^k\| \sigma, v, r \|_{W^{2,1} H^k}$. To estimate $(\| \dot e_\sigma^h (0) \|_A^2 + \| \dot e_v^h (0) \|_\rho^2)^{1/2}$, take $t= 0$ in \eqref{ed-semi-eq1}, \eqref{ed-semi-eq2}, and use $A(2 \mu \dot e_r^P(0)) = \dot e_r^P(0)$ to have
\begin{align*}
(A \dot e_\sigma^h (0), \tau) + (\div \tau, e_v^h(0)) + (\dot e_r^h (0), \tau) &= - (A (\dot e_\sigma^P(0)+ 2 \mu \dot e_r^P (0)), \tau), & & \tau \in M_h, \\
(\rho \dot e_v^h (0), w) - (\div e_\sigma^h(0), w) &= -(\rho \dot e_v^P (0), w), & & w \in V_h.
\end{align*}
Recall that $e_v^h(0) = \div e_\sigma^h(0) = 0$ from the choice of $v_{h0}$ in \eqref{did0} and the property of $\sigma_{h0}$ in \eqref{did2}. Furthermore, $(\dot e_r^h(0), \dot e_\sigma^h(0)) = 0$ because $\dot e_\sigma^h(0) \perp K_h$. Thus, taking $\tau = \dot e_\sigma^h(0)$, $w = \dot e_v^h(0)$, and adding the above equations, we have  
\begin{align*}
\| \dot e_\sigma^h (0) \|_A^2 + \| \dot e_v^h (0) \|_{\rho}^2 &= -(A (\dot e_\sigma^P (0) + 2 \mu \dot e_r^P(0)), \dot e_\sigma^h (0)) - (\rho \dot e_v^P (0), \dot e_v^h(0)).
\end{align*}
By the Cauchy--Schwarz inequality and Theorem \ref{ed-proj-thm},
\begin{align*}
(\| \dot e_\sigma^h (0) \|_A^2 + \| \dot e_v^h (0) \|_{\rho}^2)^\half \leq ch^k \| \dot \sigma (0), \dot r (0), \dot v(0) \|_k,
\end{align*}
and \eqref{locking-inter-estm4} follows from $\| \sigma, v, r \|_{W^{1,\infty} H^k} \leq c \| \sigma, v, r \|_{W^{2,1} H^k}$.
\end{proof}

\section{Improved error analysis for the Stenberg and GG elements} \label{section:imp-err-analysis}
The AFW elements have the simplest shape functions of those shown in Table \ref{mixed-approx}, in that they use the space $\mathcal{P}_k$ for stress shape functions, without any additional functions, and for the displacement and rotation shape functions they use $\mathcal{P}_{k-1}$. The Stenberg and GG elements maintain the space $\mathcal{P}_{k-1}$
for the displacement, but uses $\mathcal{P}_k$ for the rotation $r$, and
a space somewhat larger than $\mathcal{P}_k$ for the stress.  For these elements we
can prove one higher order of convergence for $\sigma$ and $r$ than is obtained by the AFW and CGG elements with the same displacement space.  Moreover, a better numerical solution of $u$ can be obtained for these elements via a local post-processing.
\subsection{Improved a priori error estimates}
Since the error analysis for the {\rm GG} and Stenberg elements parallels that for the {\rm AFW} and CGG elements, we avoid repetition and only focus on the steps that require modification.  While the convergence theory for the AFW and CGG element only
required that the density $\rho$ be bounded above and below, in order to obtain the
improved estimates for the Stenberg and GG elements, we require that the density
have bounded derivatives, at least on each element separately (it may
jump across element boundaries).  More precisely, letting $\grad_h$ denote the piecewise gradient operator adapted to the triangulation $\mathcal{T}_h$, we require that
\begin{align} \label{rho-norm}
\| \rho \|_{W_h^{1,\infty}} := \| \rho \|_{L^\infty} + \| \grad_h \rho \|_{L^\infty} <\infty.
\end{align}
Theorem~\ref{ed-semidiscrete-thm-gg} gives main result for the Stenberg and GG elements
from {\rm Table \ref{mixed-approx}}.
\begin{thm} \label{ed-semidiscrete-thm-gg} Let $(M_h, V_h, K_h)$ be the {\rm Stenberg} or {\rm GG} elements of order $k \geq 1$.
Suppose that 
\begin{align} \label{ed-full-conv-cond-gg}
\begin{split}
\sigma, r \in W^{1,1}([0,T_0]; H^{m}), \qquad v \in W^{1,1}([0,T_0]; H^{m-1}), 
\end{split}
\end{align}
for some integer $m$ with $1 \leq m \leq k+1$,
that \eqref{rho-norm} holds, and that the initial data is chosen by \rm{(\ref{did0}--\ref{did3})}. Then the semidiscrete solution $(\sigma_h, v_h, r_h)$ in {\rm (\ref{ed-semidiscrete-eq1}--\ref{ed-semidiscrete-eq3})} satisfies 
\begin{equation}
\label{ed-semidiscrete-estm-gg} 
 \| \sigma - \sigma_h, P_h v - v_h , r - r_h \|_{L^\infty L^2} 
 \leq c h^m (\| \sigma, r \|_{W^{1,1}H^m}  + \| \rho \|_{W_h^{1,\infty}} \| v \|_{W^{1,1}H^{m-1}}), 
\end{equation}
where $c$ depends on $A$ and $\rho_0$. 
\end{thm}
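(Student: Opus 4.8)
The plan is to imitate the proof of Theorem~\ref{ed-approx-thm} with the same error decomposition (\ref{ed-err-decomp1}--\ref{ed-err-decomp3}) and the reductions \eqref{ed-proj-err-prop}, but to exploit the sharper approximation available for the Stenberg and GG elements. For these elements the interpolation bound \eqref{piest} holds for $1\le m\le k+1$, and the rotation space contains piecewise $\mathcal P_k$, so that Theorem~\ref{ed-proj-thm} upgrades to $\|\es^P\|\le ch^m\|\sigma\|_m$ and $\|e_r^P\|\le ch^m\|r\|_m$ for $1\le m\le k+1$; the displacement space is still only piecewise $\mathcal P_{k-1}$, so one has merely $\|\dot e_v^P\|\le ch^{m-1}\|\dot v\|_{m-1}$ for $1\le m\le k+1$. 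Using the Sobolev embedding $W^{1,1}\hookrightarrow L^\infty$ in time, the projection-error parts $\es^P$ and $e_r^P$ of $\sigma-\sigma_h$ and $r-r_h$ are already controlled at the desired rate $h^m$, so it remains to bound the approximation errors $e_v^h=P_hv-v_h$, $\es^h$, and $e_r^h$.

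For the approximation errors, I would first record, exactly as in Theorem~\ref{ed-approx-thm}, that $\es^h(t)\perp K_h$ for all $t$ (since $\sigma_{h0}$, $\tilde\Pi_h\sigma_0$, $\dot\sigma_h$, and $\tilde\Pi_h\dot\sigma$ are all $\perp K_h$), and write the error equations as (\ref{ed-semi-eq1}--\ref{ed-semi-eq2}). Testing with $\tau=\es^h$, $w=e_v^h$ and adding, the cross terms cancel and $(\dot e_r^h,\es^h)=0$, giving
\begin{align*}
\tfrac12\tfrac{d}{dt}\bigl(\|\es^h\|_A^2+\|e_v^h\|_\rho^2\bigr)=-(A\dot e_\sigma^P,\es^h)-(\dot e_r^P,\es^h)-(\rho\dot e_v^P,e_v^h).
\end{align*}
The first two right-hand terms are bounded by $c(\|\dot e_\sigma^P\|+\|\dot e_r^P\|)\|\es^h\|_A$, which, proceeding as in (\ref{ed-evol-eq0}--\ref{ed-evol-eq2}), contributes $ch^m\|\dot\sigma,\dot r\|_{L^1H^m}\le ch^m\|\sigma,r\|_{W^{1,1}H^m}$ using $m\le k+1$. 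The last term is the crux: naively $\|\dot e_v^P\|$ is only $O(h^k)$, so one cannot afford to estimate $(\rho\dot e_v^P,e_v^h)$ directly. Instead I would use that $\dot e_v^P=\dot v-P_h\dot v\perp V_h$, together with the observation that if $\bar\rho_h$ denotes the $L^2$ projection of $\rho$ onto functions that are piecewise constant on $\mathcal{T}_h$, then $\bar\rho_h e_v^h\in V_h$ (because $e_v^h$ is piecewise $\mathcal P_{k-1}$ and $\bar\rho_h$ is piecewise constant, with no interelement continuity required), whence $(\rho\dot e_v^P,e_v^h)=((\rho-\bar\rho_h)\dot e_v^P,e_v^h)$ and, by $\|\rho-\bar\rho_h\|_{L^\infty}\le ch\|\grad_h\rho\|_{L^\infty}$,
\begin{align*}
|(\rho\dot e_v^P,e_v^h)|\le ch\|\grad_h\rho\|_{L^\infty}\|\dot e_v^P\|\,\|e_v^h\|\le ch^m\|\rho\|_{W_h^{1,\infty}}\|\dot v\|_{m-1}\,\|e_v^h\|,
\end{align*}
using $\|\dot e_v^P\|\le ch^{m-1}\|\dot v\|_{m-1}$ with $1\le m\le k+1$. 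This is exactly where hypothesis \eqref{rho-norm} enters, and it is the only essential new point over the AFW/CGG analysis. Integrating and using $e_v^h(0)=P_hv_0-v_{h0}=0$ together with $\|\es^h(0)\|_A\le ch^m\|\sigma_0,r_0\|_m$ (from \eqref{ed-compatible-ic2}, \eqref{es-weaksym-proj2}, \eqref{piest}, now valid up to $m=k+1$) and $\|\sigma_0,r_0\|_m\le c\|\sigma,r\|_{W^{1,1}H^m}$, I obtain
\begin{align*}
\|\es^h,e_v^h\|_{L^\infty L^2}\le ch^m\bigl(\|\sigma,r\|_{W^{1,1}H^m}+\|\rho\|_{W_h^{1,\infty}}\|v\|_{W^{1,1}H^{m-1}}\bigr).
\end{align*}

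For $e_r^h$ I would repeat verbatim the last part of the proof of Theorem~\ref{ed-approx-thm}: since $\sigma=C\e(u)$ and $r=\skw\grad u$, and the discrete counterpart \eqref{did1} holds at $t=0$, testing with divergence-free $\tau\in M_h$ in \eqref{ed-err-eq1} and integrating in time gives $(Ae_\sigma,\tau)+(e_r,\tau)=0$ for every divergence-free $\tau\in M_h$ and all $t$; choosing, via {\bf (A1)}, a divergence-free $\tau\in M_h$ with $(\tau,e_r^h)=\|e_r^h\|^2$ and $\|\tau\|\le c\|e_r^h\|$ yields $\|e_r^h(t)\|\le c\|\es^h(t),\es^P(t),e_r^P(t)\|$, hence $\|e_r^h\|_{L^\infty L^2}\le ch^m(\|\sigma,r\|_{W^{1,1}H^m}+\|\rho\|_{W_h^{1,\infty}}\|v\|_{W^{1,1}H^{m-1}})$ by the previous step and the projection estimates. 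Finally, writing $\sigma-\sigma_h=\es^P+\es^h$, $r-r_h=e_r^P+e_r^h$, $P_hv-v_h=e_v^h$ and applying the triangle inequality together with all the above gives \eqref{ed-semidiscrete-estm-gg}; tracking constants shows that $c$ depends only on $A$ (through the bounds on $A$ and the induced norm equivalences) and on $\rho_0$ (through $\|e_v^h\|\le\rho_0^{-1/2}\|e_v^h\|_\rho$), the upper bound on $\rho$ and its gradient being carried explicitly in $\|\rho\|_{W_h^{1,\infty}}$. I expect the only delicate point to be the treatment of $(\rho\dot e_v^P,e_v^h)$ described above; everything else is a routine adaptation of section~\ref{semi-appr-estm}.
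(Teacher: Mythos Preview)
Your proposal is correct and follows essentially the same approach as the paper: the decomposition \eqref{ed-err-decomp1}--\eqref{ed-err-decomp3}, the energy identity \eqref{ed-evol-eq0}, and the key trick of writing $(\rho\dot e_v^P,e_v^h)=((\rho-\bar\rho_h)\dot e_v^P,e_v^h)$ via the piecewise-constant projection of $\rho$ (the paper phrases this as $\bar\rho\,\dot e_v^P\perp V_h$, which is the same observation) are all exactly what the paper does, as is the treatment of $e_r^h$ and of the initial data. Your identification of the $(\rho\dot e_v^P,e_v^h)$ term as the only genuinely new point is spot on.
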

Note that, in this theorem, $m$ may be as large as $k+1$, while in Theorem~\ref{ed-semidiscrete-thm}, $m\le k$.
When $m = k+1$, the estimate \eqref{ed-semidiscrete-estm-gg} show that $v_h$ is
\emph{superclose} to $P_h v$, that is, they are nearer each other than either is
to $v$. As we show in the next section, this can be exploited to define a higher order approximation to $u$ via a local post-process.

To prove the theorem, we decompose the errors into the projection errors $(\es^{P}, e_v^{P}, e_r^{P})$ and the approximation errors $(e_\sigma^h, e_v^h, e_r^h)$ as in (\ref{ed-err-decomp1}--\ref{ed-err-decomp3}), and estimate the two contributions
separately.
\begin{thm} \label{ed-proj-thm-gg} Under the hypotheses of Theorem~\ref{ed-semidiscrete-thm-gg} the following estimates hold. 
\begin{align}
 \label{ed-proj-err1-gg} \| \es^P \|_{L^\infty L^2} &\leq ch^m \| \sigma \|_{L^\infty H^m} , & & 1 \leq m \leq k+1, \\
\label{ed-proj-err2-gg} \| e_v^P \|_{L^\infty L^2} &\leq c h^m \| v \|_{L^\infty H^{m}}  , & & 0 \leq m \leq k, \\
\label{ed-proj-err3-gg} \| e_r^P \|_{L^\infty L^2} &\leq c h^m \| r \|_{L^\infty H^m}  , & & 0 \leq m \leq k+1.
\end{align}
Furthermore, similar inequalities hold with $\sigma$, $v$, $r$ replaced by their
time derivatives.
\end{thm}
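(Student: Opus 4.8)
The plan is to mimic the proof of Theorem~\ref{ed-proj-thm} almost verbatim, the sole change being that the enlarged ranges of $m$ available for the Stenberg and GG spaces buy one extra power of $h$ for $\es^P$ and $e_r^P$: the stress interpolant $\Pi_h$ satisfies \eqref{piest} in the ``AFW, Stenberg, GG'' case (valid for $1\le m\le k+1$), and the rotation space $K_h$ now contains the piecewise polynomials of degree $k$ rather than only $k-1$. I would first fix a time $t\in[0,T_0]$ and argue pointwise in $t$; the $L^\infty L^2$ bounds in the statement then follow by taking the supremum over $t$ (and the corresponding $L^1 L^2$ bounds for the time derivatives, needed in the approximation-error analysis, follow by integration).

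For \eqref{ed-proj-err1-gg}: since $m\ge 1$, the field $\sigma(t)$ lies in $H^1(\Omega;\M)$, so the second inequality of \eqref{es-weaksym-proj2} in Lemma~\ref{weak-sym-elliptic} applies and gives $\|\es^P(t)\|=\|\sigma(t)-\tilde\Pi_h\sigma(t)\|\le c\|\sigma(t)-\Pi_h\sigma(t)\|$; invoking \eqref{piest} in the Stenberg/GG case then yields $\|\es^P(t)\|\le ch^m\|\sigma(t)\|_m$ for $1\le m\le k+1$. For \eqref{ed-proj-err2-gg} and \eqref{ed-proj-err3-gg}, $e_v^P$ and $e_r^P$ are simply the errors of the $L^2$ projections onto the piecewise polynomials of degree $\le k-1$ (for $V_h$) and $\le k$ (for $K_h$); the standard local polynomial approximation estimate on each simplex, summed over the triangulation, gives $\|v(t)-P_hv(t)\|\le ch^m\|v(t)\|_m$ for $0\le m\le k$ and $\|r(t)-P_h'r(t)\|\le ch^m\|r(t)\|_m$ for $0\le m\le k+1$.

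Finally, since $\tilde\Pi_h$, $P_h$, $P_h'$ are spatial operators independent of $t$, they commute with $\partial/\partial t$, so $\dot\es^P=\dot\sigma-\tilde\Pi_h\dot\sigma$, $\dot e_v^P=\dot v-P_h\dot v$, and $\dot e_r^P=\dot r-P_h'\dot r$; applying the three estimates just established to $\dot\sigma$, $\dot v$, $\dot r$ in place of $\sigma$, $v$, $r$ yields the asserted estimates for the time derivatives.

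I do not expect any genuine obstacle here: the argument is a routine combination of Lemma~\ref{weak-sym-elliptic}, the interpolation bound \eqref{piest}, and classical $L^2$-projection approximation theory. The one point that must be respected is the lower restriction $m\ge 1$ in \eqref{ed-proj-err1-gg}, which is forced because the comparison $\|\sigma-\tilde\Pi_h\sigma\|\le c\|\sigma-\Pi_h\sigma\|$ in Lemma~\ref{weak-sym-elliptic} requires $\sigma\in H^1$; no such restriction is needed for $e_v^P$ and $e_r^P$, since $P_h$ and $P_h'$ are bounded on all of $L^2$, so their estimates extend down to $m=0$.
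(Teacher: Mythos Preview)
Your proposal is correct and matches the paper's approach exactly: the paper omits the proof, saying only that it is similar to Theorem~\ref{ed-proj-thm} with the extra order in \eqref{ed-proj-err3-gg} coming from the higher-degree shape functions of $K_h$ in the Stenberg and GG elements. Your write-up supplies precisely the details the paper leaves implicit.
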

The proof is similar to that of Theorem \ref{ed-proj-thm}, and so will be omitted. Note that a better approximation \eqref{ed-proj-err3-gg} in $K_h$ is obtained because the shape functions of $K_h$ for the Stenberg and GG elements of order $k$ are one degree higher than the ones for the AFW and CGG elements of order $k$. 

Now we prove a priori estimates of the approximation errors. 
\begin{thm} Under the hypotheses of Theorem~\ref{ed-semidiscrete-thm-gg}
\begin{equation} \label{ed-theta-estm-gg}
\| e_{\sigma}^h, e_v^h, e_r^h \|_{L^\infty L^2} \leq c h^{m} (\| \sigma, r \|_{W^{1,1}H^m}  + \| \rho \|_{W_h^{1,\infty}} \| v \|_{W^{1,1}H^{m-1}}),
\end{equation}
for $1 \leq m \leq k+1$ where $c$ depends on $A$, $\rho_0$.
\end{thm}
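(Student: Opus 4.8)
The plan is to mimic the structure of the proof of Theorem~\ref{ed-approx-thm}, but to exploit the superconvergence of the elliptic projection that is available for the Stenberg and GG elements, namely the estimate \eqref{ed-proj-err1-gg} with $m$ up to $k+1$ and the estimate \eqref{ed-proj-err3-gg} for $e_r^P$ with $m$ up to $k+1$. As before, one writes the error equations in the decomposed form \eqref{ed-semi-eq1}--\eqref{ed-semi-eq2}, tests with $\tau = \es^h$, $w = e_v^h$, uses $\es^h \perp \dot e_r^h$ (which holds exactly as in the AFW/CGG case, since $\dot e_r^h \in K_h$ and $\es^h \perp K_h$), and arrives at the analogue of \eqref{ed-evol-eq0}:
\begin{align*}
\half \frac{d}{dt}\bigl(\| \es^h \|_A^2 + \| e_v^h \|_\rho^2\bigr) = -(A\dot e_\sigma^P, \es^h) - (\dot e_r^P, \es^h) - (\rho \dot e_v^P, e_v^h).
\end{align*}
The first two terms on the right are bounded by $c\,\|\dot e_\sigma^P, \dot e_r^P\| \cdot \|\es^h\|$, and by Theorem~\ref{ed-proj-thm-gg} (applied to the time derivatives) these projection errors are $O(h^m)$ with $m\le k+1$ in terms of $\|\dot\sigma, \dot r\|_{H^m}$. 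So the only delicate term is $(\rho \dot e_v^P, e_v^h)$.

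The main obstacle is precisely this term $(\rho\dot e_v^P, e_v^h)$. In the AFW/CGG analysis one simply bounds it by $c\|\dot e_v^P\|\,\|e_v^h\| \le ch^m\|\dot v\|_{H^m}\|e_v^h\|$ with $m\le k$, but here we are only assuming $v \in W^{1,1}H^{m-1}$, so $\|\dot e_v^P\| = \|\dot v - P_h\dot v\|$ is only $O(h^{m-1})$, one power short. The fix is the standard duality/orthogonality trick already used for $\|\div e_\sigma^P\|_{-1}$ in the robustness proof: since $\dot e_v^P = \dot v - P_h\dot v$ is $L^2$-orthogonal to $V_h$, and since $V_h$ contains the piecewise constants, we may subtract from $\rho e_v^h$ (or from $e_v^h$ times a local average of $\rho$) a suitable piecewise-constant function without changing the inner product. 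Concretely, let $\overline{(\rho e_v^h)}$ denote the $L^2$-projection of $\rho e_v^h$ onto piecewise constants; then $(\rho \dot e_v^P, e_v^h) = (\dot e_v^P, \rho e_v^h - \overline{(\rho e_v^h)})$, and by a piecewise Poincaré/Bramble--Hilbert estimate on each element,
\begin{align*}
\|\rho e_v^h - \overline{(\rho e_v^h)}\|_{L^2(\calT)} \le ch\,\|\grad_h(\rho e_v^h)\|_{L^2(\calT)} \le ch\,\|\rho\|_{W_h^{1,\infty}}\,\bigl(\|\grad_h e_v^h\| + \|e_v^h\|\bigr).
\end{align*}
Here an inverse inequality on $V_h$ converts $\|\grad_h e_v^h\|$ into $ch^{-1}\|e_v^h\|$, and the two factors of $h$ cancel, leaving $|(\rho\dot e_v^P, e_v^h)| \le c\|\rho\|_{W_h^{1,\infty}}\|\dot e_v^P\|\,\|e_v^h\|$; now $\|\dot e_v^P\| = O(h^{m-1+1}) $ is effectively upgraded because the extra $h$ from Poincaré compensates the one missing power, giving the desired $h^m$ with the factor $\|\rho\|_{W_h^{1,\infty}}\|\dot v\|_{H^{m-1}}$. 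This is exactly why the hypothesis \eqref{rho-norm} appears in the theorem.

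Granting that term, the Gronwall-type argument of \eqref{ed-evol-eq1}--\eqref{ed-evol-eq2} proceeds verbatim: dividing by $(\|\es^h\|_A^2 + \|e_v^h\|_\rho^2)^{1/2}$ and integrating in time yields
\begin{align*}
\bigl(\|\es^h(t)\|_A^2 + \|e_v^h(t)\|_\rho^2\bigr)^{1/2} \le \bigl(\|\es^h(0)\|_A^2 + \|e_v^h(0)\|_\rho^2\bigr)^{1/2} + c\int_0^t \bigl(\|\dot e_\sigma^P, \dot e_r^P\| + \|\rho\|_{W_h^{1,\infty}}\|\dot e_v^P\|\bigr)\,ds.
\end{align*}
The initial-data term is handled exactly as in \eqref{sigma0-estm}, using \eqref{ed-compatible-ic2} (with the $k+1$ branch now), \eqref{es-weaksym-proj2}, \eqref{piest}, and the choice $v_{h0} = P_h v_0$ which gives $e_v^h(0) = 0$; the integral term is bounded by Theorem~\ref{ed-proj-thm-gg} and Sobolev embedding $\|\sigma, r\|_{L^\infty H^m} \le c\|\sigma, r\|_{W^{1,1}H^m}$, $\|v\|_{L^\infty H^{m-1}} \le c\|v\|_{W^{1,1}H^{m-1}}$. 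This establishes the bound on $\|\es^h, e_v^h\|_{L^\infty L^2}$. Finally, $\|e_r^h\|_{L^\infty L^2}$ is controlled exactly as in \eqref{ed-approx-estm2}: using $\sigma = C\e(u)$, $r = \skw\grad u$, and the divergence-free part of \eqref{ed-err-eq1}, one gets $(Ae_\sigma, \tau) + (e_r, \tau) = 0$ for all divergence-free $\tau \in M_h$, hence $(e_r^h, \tau) = -(A(e_\sigma^h + e_\sigma^P), \tau) + (e_r^P, \tau)$ for such $\tau$; then condition {\bf (A1)} supplies a divergence-free $\tau \in M_h$ with $(\tau, e_r^h) = \|e_r^h\|^2$ and $\|\tau\| \le c\|e_r^h\|$, whence $\|e_r^h(t)\| \le c\|e_\sigma^h(t), e_\sigma^P(t), e_r^P(t)\|$, and the already-established bounds close the argument.
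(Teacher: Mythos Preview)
Your overall strategy is right and matches the paper's: derive \eqref{ed-evol-eq0}, gain one extra power of $h$ in the term $(\rho\dot e_v^P, e_v^h)$ using the piecewise regularity of $\rho$, and then run the energy argument and the $e_r^h$ bound exactly as in Theorem~\ref{ed-approx-thm}. The only problem is in how you extract that extra power of $h$.

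You subtract from $\rho e_v^h$ its elementwise mean $\overline{(\rho e_v^h)}$ and invoke Poincar\'e to get a factor $h$. But then you must bound $\|\grad_h(\rho e_v^h)\|$, and since $e_v^h$ is a piecewise polynomial of degree $k-1\ge 0$, the term $\rho\,\grad_h e_v^h$ forces an inverse estimate $\|\grad_h e_v^h\|\le ch^{-1}\|e_v^h\|$. That $h^{-1}$ exactly cancels the $h$ from Poincar\'e, and you end up with
\[
|(\rho\dot e_v^P, e_v^h)| \le c\,\|\rho\|_{W_h^{1,\infty}}\,\|\dot e_v^P\|\,\|e_v^h\|,
\]
which is no better than the naive Cauchy--Schwarz bound; $\|\dot e_v^P\|$ is still only $O(h^{m-1})$, not $O(h^m)$. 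The sentence ``$\|\dot e_v^P\| = O(h^{m-1+1})$ is effectively upgraded because the extra $h$ from Poincar\'e compensates the one missing power'' double-counts that factor of $h$.

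The fix (and this is what the paper does) is to project $\rho$ alone, not the product $\rho e_v^h$, onto piecewise constants. If $\bar\rho$ denotes the elementwise mean of $\rho$, then $\bar\rho\,e_v^h\in V_h$ (piecewise constants times $V_h$ stay in $V_h$), so $\bar\rho\,\dot e_v^P$ is $L^2$-orthogonal to $V_h$ and
\[
(\rho\dot e_v^P, e_v^h) = ((\rho-\bar\rho)\dot e_v^P, e_v^h),
\qquad
\|(\rho-\bar\rho)\dot e_v^P\| \le \|\rho-\bar\rho\|_{L^\infty}\|\dot e_v^P\|
\le ch\,\|\rho\|_{W_h^{1,\infty}}\|\dot e_v^P\|.
\]
Here the factor $h$ comes from approximating $\rho$ by a piecewise constant and is \emph{not} consumed by any inverse inequality, so one genuinely obtains $h\cdot h^{m-1}=h^m$. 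With this correction, the rest of your argument (the Gronwall step, the treatment of the initial data via \eqref{sigma0-estm} and \eqref{ed-compatible-ic2}, and the bound on $e_r^h$ via {\bf (A1)}) goes through unchanged.
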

\begin{proof}
Arguing as in the proof of Theorem \ref{ed-approx-thm} we obtain \eqref{ed-evol-eq0}.  Let $\bar\rho$ be the $L^2$-orthogonal projection of $\rho$ into the space of piecewise constant functions associated to the triangulation $\mathcal{T}_h$. Then $\bar\rho \dot e_v^P$
is $L^2$-orthogonal to $V_h$, and therefore $(\rho \dot e_v^P, e_v^h) = ((\rho - \bar\rho) \dot e_v^P, e_v^h)$, so we may obtain from \eqref{ed-evol-eq0} that 
\begin{align*}
\half \frac{d}{dt} (\| \es^h \|_A^2 + \| e_v^h \|_{\rho}^2) \leq  c \| \dot e_\sigma^P, \dot e_r^P, (\rho - \bar\rho) \dot e_v^P \| \,(\| \es^h \|_A^2 + \| e_v^h \|_{\rho}^2)^\half,
\end{align*}
which leads to 
\begin{align*}
(\| e_{\sigma}^h (t) \|_A^2 + \| e_v^h (t) \|_{\rho}^2)^{\half} \leq (\| e_{\sigma}^h(0) \|_A^2 + \| e_v^h (0) \|_{\rho}^2)^{\half} + c\int_0^t \| \dot e_{\sigma}^P, \dot e_r^P, (\rho - \bar\rho) \dot e_v^P \| ds.
\end{align*}
The integral $\int_0^T\|\dot e_\sigma^P,\dot e_r^P\|ds$
and the term $\|e_\sigma^h(0)\|_A$ may be bounded as before,
and again, $e_v^h(0)=0$ because of our choice of initial data.
By the H\"{o}lder inequality, we have  
\begin{align*}
\| (\rho - \bar\rho) \dot e_v^P \| \leq \| \rho - \bar\rho \|_{L^\infty} \| \dot e_v^P \| \leq ch \| \rho \|_{W_h^{1,\infty}} \| \dot e_v^P \|.
\end{align*}
Combining these estimates, we obtain the bound on $e_{\sigma}^h$ and $e_v^h$ in \eqref{ed-theta-estm-gg}.  The bound on $e_r^h$ then follows just as in Theorem~\ref{ed-approx-thm}.    
\end{proof}
Theorem \ref{ed-semidiscrete-thm-gg} follows from Theorems~\ref{ed-theta-estm-gg}
and \ref{ed-proj-thm-gg}. Note that the bound on
$P_h v - v_h = e_v^h$ comes directly from \eqref{ed-theta-estm-gg}.

\subsection{Post-processing}
Let $V_h^*$ be the space of (possibly discontinuous) piecewise polynomials adapted to $\mathcal{T}_h$ of degree $k$ (one degree higher than for $V_h$), and denote
by $\tilde{V}_h$  the orthogonal complement of $V_h$ in $V_h^*$. Denote by $P_h^*$ and $\tilde{P}_h$ the $L^2$-orthogonal projections onto $V_h^*$ and $\tilde{V}_h$, respectively. With $(\sigma_h, v_h, r_h)$ the semidiscrete solution and $u_h$ defined by 
\begin{align} \label{ed-numeric-u}
u_h(t) = u_{h0} + \int_0^t v_h (s)\, ds,
\end{align}
we define $u_h^* \in V_h^*$ at each time $t \in [0,T_0]$ by 
\begin{align}
\label{ed-post-eq1} (\grad_h u_h^*, \grad_h w) &= (A \sigma_h + r_h, \grad_h w), & & w \in \tilde{V}_h, \\
\label{ed-post-eq2} (u_h^*, w) &= (u_h, w), & & w \in V_h.
\end{align}
Note that $V_h^*$ is a discontinuous piecewise polynomial space, so $u_h^*$ can be computed element-wise at relatively little computational cost.
\begin{thm} \label{ed-post-thm}
Let $(\sigma_h, v_h, r_h)$ be the semidiscrete solution for the Stenberg or GG method
of order $k\ge 1$,
and let $u_h$ be defined by \eqref{ed-numeric-u} with $u_{h0}$ chosen so that
$\| P_h u_0 - u_{h0} \| \leq ch^{k+1}$ {\rm (}e.g., $u_{h0}=P_h u_0${\rm )}. Let $u_h^*$ be
defined  by {\rm (\ref{ed-post-eq1}--\ref{ed-post-eq2})}.  Then
\begin{align}
\label{ed-post-error} \| u - u_h^* \|_{L^\infty L^2} \leq ch^{k+1} \| \sigma, v, r \|_{W^{1,1}H^{k+1}}, 
\end{align}
holds with $c$ depending on $A$, $\rho_0$, $\| \rho \|_{W_h^{1,\infty}}$. 
\end{thm}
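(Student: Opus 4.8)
\noindent\emph{Plan of proof.}
The argument follows the standard route for superconvergent post-processing: compare $u_h^*$ with the field $\bar u_h^*\in V_h^*$ obtained by applying the recipe {\rm(\ref{ed-post-eq1}--\ref{ed-post-eq2})} to the \emph{exact} data, split $u-u_h^* = (u-\bar u_h^*)+(\bar u_h^*-u_h^*)$, bound the first term by approximation theory and the second using the a priori estimates of Theorem~\ref{ed-semidiscrete-thm-gg}. The linchpin is the identity $A\sigma+r=\grad u$, which holds since $A\sigma=\e(u)$ and $r=\skw\grad u$; consequently the exact $u$ satisfies $(\grad_h u,\grad_h w)=(A\sigma+r,\grad_h w)$ for $w\in\tilde V_h$ and $(u,w)=(P_h u,w)$ for $w\in V_h$, so $\bar u_h^*$, defined by $(\grad_h\bar u_h^*,\grad_h w)=(A\sigma+r,\grad_h w)$ for $w\in\tilde V_h$ and $(\bar u_h^*,w)=(P_h u,w)$ for $w\in V_h$, is a genuine ``partially exact'' version of $u_h^*$.

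The first term is handled element by element. For $k\ge1$ the local problem defining $\bar u_h^*|_T$ is well-posed, since the only field in $\mathcal P_k(T)$ that is $L^2(T)$-orthogonal to $\mathcal P_{k-1}(T)$ and has vanishing gradient is $0$; moreover this local problem reproduces $\mathcal P_k$, that is, $\bar u_h^*|_T=u|_T$ whenever $u|_T\in\mathcal P_k(T)$. Letting $\Pi_T^k$ denote the $L^2(T)$-projection onto $\mathcal P_k(T)$ and $z:=\bar u_h^*-\Pi_T^k u$ on $T$, one checks that $z\in\tilde V_h|_T$ (hence $z$ has zero mean on $T$) and $\|\grad z\|_T\le\|\grad(u-\Pi_T^k u)\|_T$; the element Poincar\'e inequality then gives $\|z\|_T\le ch_T\|\grad(u-\Pi_T^k u)\|_T$, and the standard approximation estimate for $\Pi_T^k$ yields $\|u-\bar u_h^*\|_T\le ch_T^{k+1}|u|_{H^{k+1}(T)}$. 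Summing over $T$, using $|u|_{H^{k+1}}=|\grad u|_{H^k}=|A\sigma+r|_{H^k}\le c(\|\sigma\|_{H^k}+\|r\|_{H^k})$ (with $c$ depending on $A$) and the time embedding $\|\sigma,r\|_{L^\infty H^k}\le c\|\sigma,r\|_{W^{1,1}H^k}$, I obtain $\|u-\bar u_h^*\|_{L^\infty L^2}\le ch^{k+1}\|\sigma,r\|_{W^{1,1}H^{k+1}}$.

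For the second term, put $e:=u_h^*-\bar u_h^*\in V_h^*$ and decompose $e=P_h e+\tilde P_h e$. Subtracting the defining relations gives $(\grad_h e,\grad_h w)=-(A\es+e_r,\grad_h w)$ for $w\in\tilde V_h$ and $(e,w)=(u_h-P_h u,w)$ for $w\in V_h$. Testing the second with $P_h e$ gives $\|P_h e\|\le\|u_h-P_h u\|$; testing the first with $\tilde P_h e$, bounding $\|\grad_h P_h e\|$ by an inverse inequality and $\tilde P_h e$ by the element Poincar\'e inequality, gives $\|\tilde P_h e\|\le ch\|A\es+e_r\|+c\|P_h e\|$, so $\|e\|\le c\|u_h-P_h u\|+ch(\|\es\|+\|e_r\|)$. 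Here $\|\es\|_{L^\infty L^2}$ and $\|e_r\|_{L^\infty L^2}$ are $O(h^{k+1})$ by Theorem~\ref{ed-semidiscrete-thm-gg}, so the extra factor $h$ makes their contribution harmless; and from $u_h(t)-P_h u(t)=(u_{h0}-P_h u_0)-\int_0^t e_v^h(s)\,ds$ (with $e_v^h=P_h v-v_h$) the hypothesis $\|u_{h0}-P_h u_0\|\le ch^{k+1}$ together with the supercloseness bound $\|e_v^h\|_{L^\infty L^2}\le ch^{k+1}(\cdots)$ of Theorem~\ref{ed-semidiscrete-thm-gg} gives $\|u_h-P_h u\|_{L^\infty L^2}\le ch^{k+1}(\cdots)$. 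The triangle inequality then yields \eqref{ed-post-error}.

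I expect the middle step to be the only real obstacle: one must check that the non-standard local problem defining the post-process (a Neumann-type condition testing $\grad_h u_h^*$ against $\grad_h\tilde V_h$, together with $L^2$-moment conditions against $V_h$) is solvable and reproduces $\mathcal P_k$ exactly, and then carry out a Bramble--Hilbert/scaling argument for \emph{this} projection in place of a nodal interpolant. The remaining manipulations---energy-free, plus the temporal integration of $e_v^h$---are routine once the a priori estimates already established are in hand.
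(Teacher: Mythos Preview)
Your proposal is correct and follows precisely the route the paper indicates (it omits the proof, referring to \cite{GG10} and \cite{LeeThesis} for the stationary analogue): split $u-u_h^*$ through the auxiliary field $\bar u_h^*$ obtained by post-processing the exact data, bound $u-\bar u_h^*$ by local polynomial reproduction and Bramble--Hilbert, and bound $\bar u_h^*-u_h^*$ via the supercloseness $\|P_h v-v_h\|_{L^\infty L^2}=O(h^{k+1})$ of Theorem~\ref{ed-semidiscrete-thm-gg}, integrated in time to control $\|P_h u-u_h\|$. Your identification of the local solvability/reproduction of the map $u\mapsto\bar u_h^*$ as the only nontrivial step is accurate, and your treatment of it (showing $z=\bar u_h^*-\Pi_T^k u\in\tilde V_h|_T$, then $\|\grad z\|_T\le\|\grad(u-\Pi_T^k u)\|_T$ plus Poincar\'e) is the standard one.
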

The proof of this theorem is similar to the post-processing of stationary elasticity problem in \cite{GG10}. A detailed proof can be found in \cite{LeeThesis}. 

\section{Numerical results} \label{section:numerical}
In this section, we present some numerical results supporting the analysis above. As the domain we take the unit square $(0,1) \times (0,1)$ and as finite elements we use the {\rm AFW} element with $k=2$ in the first two examples, and with $k=3$ in the third. In all three examples, we take the material to be homogeneous and isotropic with constant density and, for simplicity, we simply set $\mu=\lambda=\rho=1$.
In each example, we use a temporal discretization method with the same order as the spatial
discretization and with $\Delta t = h$.  All results were implemented using the FEniCS project software \cite{fenicsbook}.

\begin{eg}
In the first example, we take a smooth displacement field which satisfies the homogeneous
displacement boundary conditions:
\begin{align} \label{ed-disp1}
u(t,x,y) =
\begin{pmatrix}
 \sin (\pi x) \sin (\pi y) \sin t \\
 x(1-x)y(1-y) \sin t
\end{pmatrix},
\end{align}
and define $f$ accordingly.  Table \ref{result1} displays the error at time $t=1$, for a
sequence of meshes,
and the observed rates of convergence.  For the numerical method we take
the AFW elements with $k=2$ for spatial discretization, and the Crank--Nicolson scheme 
with $\Delta t=h$ for time discretization which is also second order.  As predicted by Theorem~\ref{ed-semidiscrete-thm} the $L^2$ errors for all variables converge to zero with second order.
\end{eg}
\begin{table}[ht]  
\caption[Numerical result 1 of elastodynamics]{Errors and observed convergence rates for the test problem with exact solution given in \eqref{ed-disp1}.} \label{result1}
\begin{tabular}{>{\small}c >{\small}c >{\small}c >{\small}c >{\small}c >{\small}c >{\small}c >{\small}c >{\small}c }
\hline
\multirow{2}{*}{$\frac 1h$} & \multicolumn{2}{>{\small}c}{$\| \sigma - \sigma_h \|$} & \multicolumn{2}{>{\small}c}{$\| v - v_h\|$} & \multicolumn{2}{>{\small}c}{$\| u - u_h \|$} & \multicolumn{2}{>{\small}c}{$\| r - r_h \| $} \\ 
 & error & order & error & order & error & order & error & order \\ \hline
4 & 5.73e-02 &  --  & 1.03e-02 &  --  & 1.61e-02 &  --  & 2.42e-02 &  --  \\
8 & 1.19e-02 & 1.99 & 2.62e-03 & 1.98 & 4.06e-03 & 1.99 & 6.09e-03 & 1.99 \\
16& 2.78e-03 & 2.00 & 6.57e-04 & 2.00 & 1.02e-03 & 2.00 & 1.52e-03 & 2.00 \\
32& 6.77e-04 & 2.00 & 1.64e-04 & 2.00 & 2.54e-04 & 2.00 & 3.80e-04 & 2.00 \\
64& 1.67e-04 & 2.00 & 4.10e-05 & 2.00 & 6.35e-05 & 2.00 & 9.51e-05 & 2.00 \\ \hline
\end{tabular}                           
\end{table}

\begin{eg}
In this example, the displacement
boundary conditions are inhomogeneous, and so we use the formulation \eqref{ed-inhomog-dirichlet}.
We take an exact solution with limited regularity,
\begin{align} \label{ed-disp2}
u(t,x,y) =
\begin{pmatrix}
(1+t^2) x^{\alpha} y^2 \\
(1+\cos t) x^2 y^{\alpha}
\end{pmatrix} ,
\end{align}
and again define the load accordingly. The fields $v$ and $\sigma$ then belong to $H^{\alpha+1/2-\delta}$ and $H^{\alpha-1/2-\delta}$, respectively, for arbitrary $\delta >0$. Numerical results for several different values of $\alpha$ are shown in Table~\ref{result2}. We see that the convergence rates are somewhat decreased due to the decreased regularity of the solution (but perhaps not as much as might be expected).
\end{eg}

\begin{table}[ht]  
\caption[Numerical result 2 of elastodynamics]{Order of convergence for the exact solution with displacement as in \eqref{ed-disp2} ($\lambda = 1$, $\mu = 1$, $h = \lap t$ and $T_0 = 1$).} \label{result2}
\begin{tabular}{>{\small}c >{\small}c >{\small}c >{\small}c >{\small}c >{\small}c >{\small}c >{\small}c >{\small}c >{\small}c}
\hline
\multirow{2}{*}{$\alpha$} & \multirow{2}{*}{$\frac 1h$} & \multicolumn{2}{>{\small}c}{$\| \sigma - \sigma_h \|$} & \multicolumn{2}{>{\small}c}{$\| v - v_h\|$} & \multicolumn{2}{>{\small}c}{$\| u - u_h \|$} & \multicolumn{2}{>{\small}c}{$\| r - r_h \| $} \\ 
& & error & order & error & order & error & order & error & order \\ \hline
\multirow{5}{*}{$2.2$} 
& 4 & 4.92e-02 &  --  & 4.04e-02 &  --  & 4.22e-02 &  --  & 1.28e-02 &  -- \\
& 8 & 2.23e-02 & 1.14 & 1.07e-02 & 1.91 & 1.04e-02 & 2.02 & 4.28e-03 & 1.58 \\
& 16& 7.37e-03 & 1.60 & 3.67e-03 & 1.55 & 2.52e-03 & 2.05 & 1.45e-03 & 1.56 \\
& 32& 2.37e-03 & 1.63 & 1.26e-03 & 1.54 & 6.31e-04 & 2.00 & 4.18e-04 & 1.79 \\
& 64& 7.60e-04 & 1.64 & 4.13e-04 & 1.61 & 1.58e-04 & 2.00 & 1.22e-04 & 1.78 \\ \hline
\multirow{5}{*}{$2.7$} 
& 4 & 6.92e-02 &  --  & 6.72e-02 &  --  & 4.92e-02 &  --  & 1.61e-02 &  -- \\
& 8 & 2.85e-02 & 1.28 & 9.20e-03 & 2.87 & 1.19e-02 & 2.05 & 4.63e-03 & 1.80 \\
& 16& 7.50e-03 & 1.92 & 1.76e-03 & 2.39 & 2.94e-03 & 2.01 & 1.21e-03 & 1.93 \\
& 32& 1.90e-03 & 1.98 & 4.12e-04 & 2.09 & 7.34e-04 & 2.00 & 3.02e-04 & 2.01 \\
& 64& 4.82e-04 & 1.98 & 1.08e-04 & 1.93 & 1.83e-04 & 2.00 & 8.03e-05 & 1.91 \\ \hline
\multirow{5}{*}{$3.2$} 
& 4 & 1.14e-01 &  --  & 1.05e-01 &  --  & 5.70e-02 &  --  & 2.55e-02 &  -- \\
& 8 & 4.41e-02 & 1.36 & 1.49e-02 & 2.81 & 1.37e-02 & 2.05 & 7.65e-03 & 1.73 \\
& 16& 1.17e-02 & 1.91 & 2.68e-03 & 2.47 & 3.42e-03 & 2.01 & 2.09e-03 & 1.87 \\
& 32& 2.96e-03 & 1.99 & 6.07e-04 & 2.15 & 8.51e-04 & 2.00 & 5.21e-04 & 2.01 \\
& 64& 7.39e-04 & 2.00 & 1.57e-04 & 1.95 & 2.13e-04 & 2.00 & 1.33e-04 & 1.97 \\ \hline
\end{tabular}
\end{table} 

\begin{eg} \label{ed-eg-radau}
In the final example, we consider a third order method.  For spatial discretization
we use the AFW method with $k=3$, and for time discretization we use the 2-stage RadauIIA method which is a third-order implicit Runge--Kutta methods with the Butcher tableau shown in Table~\ref{RadauIIA-butcher}.

In the previous examples, $u_h$ is obtained by a simple numerical time integration of $v_h$ based on the trapezoidal rule. However, the trapezoidal rule gives only second order convergence in $\lap t$, which is lower than the convergence rates of other unknowns. To achieve third order convergence of $\| u - u_h \|$ a numerical integration of $v_h$, exploiting additional numerical data generated by the RadauIIA method, is needed. In Table \ref{RadauIIA-butcher}, the RadauIIA method at $i$th time step ($t = i \lap t$) generates an auxiliary numerical data approximating $\dot v ((i + 1/3) \lap t )$, which will be denoted by $V_t^{i+1/3}$. Let $V^i$ be the $i$th numerical velocity obtained by the RadauIIA method and $U^0$ be the numerical initial displacement such that $\| u(0) - U^0 \| \leq ch^3$. Then, regarding Taylor expansion
\begin{align*}
g(a) = g(0) + a g'(0) + \frac {a^2}2 g''(a/3) + o(a^3), 
\end{align*}
the numerical integration for reconstruction of $u_h$ is inductively defined by 
\begin{align*}
 U^{i+1} = U^i + \lap t  V^i + \frac {\lap t^2}2 V_t^{i+\frac 13}, \qquad i \geq 0.
\end{align*}
The numerical results in Table \ref{RadauIIA} show that the expected third order convergence rates are obtained for all errors.

\begin{table}[h] 
\caption[The Butcher tableau for the 2-stage RadauIIA scheme]{The Butcher tableau for the 2-stage RadauIIA Runge--Kutta scheme.} \label{RadauIIA-butcher}
\begin{center}
\begin{tabular} {c| c c}
$1/3$ &$5/{12}$ &$- 1/{12}$ \\ 
$1$ & $3/4$ & $1/4$ \\
\hline 
 & $3/4$ &$1/4$
\end{tabular}
\end{center}
\end{table}

\begin{table}[!t]  
\caption[Numerical result 4 of elastodynamics]{Order of convergence for the exact solution with displacement in \eqref{ed-disp1} ($\lambda = 1$, $\mu = 1$, $h = \lap t$ and $T_0 = 1$). The AFW elements with $k=3$ and the 2-stage RadauIIA time discretization are used.} \label{RadauIIA}
\begin{tabular}{>{\small}c >{\small}c >{\small}c >{\small}c >{\small}c >{\small}c >{\small}c >{\small}c >{\small}c}
\hline
\multirow{2}{*}{$\frac 1h$} & \multicolumn{2}{>{\small}c}{$\| \sigma - \sigma_h \|$} & \multicolumn{2}{>{\small}c}{$\| v - v_h\|$} & \multicolumn{2}{>{\small}c}{$\| u - u_h \|$} & \multicolumn{2}{>{\small}c}{$\| r - r_h \| $} \\ 
 & error & order & error & order & error & order & error & order \\ \hline
4 & 1.31e-02 &  --  & 1.38e-03 &  --  & 2.10e-03 &  --  & 3.77e-03 &  --  \\
8 & 1.02e-03 & 3.68 & 1.78e-04 & 2.96 & 2.58e-04 & 3.02 & 4.18e-04 & 3.17 \\
16& 9.88e-05 & 3.37 & 2.23e-05 & 3.00 & 3.25e-05 & 2.99 & 5.05e-05 & 3.05 \\
32& 1.14e-05 & 3.12 & 2.78e-06 & 3.00 & 4.09e-06 & 2.99 & 6.28e-06 & 3.01 \\
64& 1.40e-06 & 3.03 & 3.46e-07 & 3.00 & 5.13e-07 & 2.99 & 7.85e-07 & 3.00 \\ \hline
\end{tabular}
\end{table}
 
\end{eg}

%

\bibliography{FEM}

\providecommand{\bysame}{\leavevmode\hbox to3em{\hrulefill}\thinspace}
\providecommand{\MR}{\relax\ifhmode\unskip\space\fi MR }
\providecommand{\MRhref}[2]{%
  \href{http://www.ams.org/mathscinet-getitem?mr=#1}{#2}
}
\providecommand{\href}[2]{#2}
\begin{thebibliography}{10}

\bibitem{AmaraThomas}
M.~Amara and J.~M. Thomas, \emph{Equilibrium finite elements for the linear
  elastic problem}, Numer. Math. \textbf{33} (1979), no.~4, 367--383.
  \MR{553347 (81b:65096)}

\bibitem{ABD84}
Douglas~N. Arnold, Franco Brezzi, and Jr.~Jim Douglas, \emph{P{EERS}: a new
  mixed finite element for plane elasticity}, Japan J. Appl. Math. \textbf{1}
  (1984), 347--367. \MR{2601614}

\bibitem{ADG84}
Douglas~N. Arnold, Jr.~Jim Douglas, and Chaitan~P. Gupta, \emph{A family of
  higher order mixed finite element methods for plane elasticity}, Numer. Math.
  \textbf{45} (1984), no.~1, 1--22. \MR{761879 (86a:65112)}

\bibitem{AFW07}
Douglas~N. Arnold, Richard~S. Falk, and Ragnar Winther, \emph{Mixed finite
  element methods for linear elasticity with weakly imposed symmetry}, Math.
  Comp. \textbf{76} (2007), no.~260, 1699--1723 (electronic). \MR{2336264
  (2008k:74057)}

\bibitem{BJT02}
E.~B{\'e}cache, P.~Joly, and C.~Tsogka, \emph{A new family of mixed finite
  elements for the linear elastodynamic problem}, SIAM J. Numer. Anal.
  \textbf{39} (2002), no.~6, 2109--2132 (electronic). \MR{1897952
  (2003d:65089)}

\bibitem{BFBook}
F.~Brezzi and M.~Fortin, \emph{Mixed and hybrid finite element methods},
  Springer Series in computational Mathematics, vol.~15, Springer, 1992.
  \MR{MR2233925 (2008i:35211)}

\bibitem{Cazenave-Alain-book}
Thierry Cazenave and Alain Haraux, \emph{An introduction to semilinear
  evolution equations}, Oxford Lecture Series in Mathematics and its
  Applications, vol.~13, The Clarendon Press Oxford University Press, New York,
  1998, Translated from the 1990 French original by Yvan Martel and revised by
  the authors. \MR{1691574 (2000e:35003)}

\bibitem{CGG10}
Bernardo Cockburn, Jayadeep Gopalakrishnan, and Johnny Guzm{\'a}n, \emph{A new
  elasticity element made for enforcing weak stress symmetry}, Math. Comp.
  \textbf{79} (2010), no.~271, 1331--1349. \MR{2629995}

\bibitem{Coddington-Levinson-book}
Earl~A. Coddington and Norman Levinson, \emph{Theory of ordinary differential
  equations}, McGraw-Hill Book Company, Inc., New York-Toronto-London, 1955.
  \MR{0069338 (16,1022b)}

\bibitem{Fraeijs}
B.~X.~Fraeijs de~Veubeke, \emph{Stress function approach}, Proceedings of the
  World Congress on Finite Element Methods in Structural Mechanics, vol.~5,
  1975, pp.~J.1 -- J.51.

\bibitem{DougGup86}
Jim Douglas, Jr. and Chaitan~P. Gupta, \emph{Superconvergence for a mixed
  finite element method for elastic wave propagation in a plane domain}, Numer.
  Math. \textbf{49} (1986), no.~2-3, 189--202. \MR{848520 (88c:65095)}

\bibitem{Duvaut-Lions-book}
G.~Duvaut and J.-L. Lions, \emph{Inequalities in mechanics and physics},
  Springer-Verlag, Berlin, 1976, Translated from the French by C. W. John,
  Grundlehren der Mathematischen Wissenschaften, 219. \MR{0521262 (58 \#25191)}

\bibitem{GG10}
J.~Gopalakrishnan and J.~Guzm{\'a}n, \emph{A second elasticity element using
  the matrix bubble}, IMA J. Numer. Anal. \textbf{32} (2012), no.~1, 352--372.
  \MR{2875255}

\bibitem{Guzman11}
J.~Guzm{\'a}n, \emph{A unified analysis of several mixed methods for elasticity
  with weak stress symmetry}, J. Sci. Comput. \textbf{44} (2010), no.~2,
  156--169. \MR{2659794 (2011h:74021)}

\bibitem{Johnson_Mercier}
C.~Johnson and B.~Mercier, \emph{Some equilibrium finite element methods for
  two-dimensional elasticity problems}, Numer. Math. \textbf{30} (1978), no.~1,
  103--116. \MR{0483904 (58 \#3856)}

\bibitem{LeeThesis}
Jeonghun~J. Lee, \emph{Mixed methods with weak symmetry for time dependent
  problems of elasticity and viscoelasticity}, Ph.D. thesis, University of
  Minnesota, Twin cities, 2012.

\bibitem{fenicsbook}
A.~Logg, K.-A. Mardal, and G.~N. Wells (eds.), \emph{Automated solution of
  differential equations by the finite element method}, Lecture Notes in
  Computational Science and Engineering, vol.~84, Springer, 2012.

\bibitem{Mak92}
Ch.~G. Makridakis, \emph{On mixed finite element methods for linear
  elastodynamics}, Numer. Math. \textbf{61} (1992), no.~2, 235--260.
  \MR{1147578 (92j:65142)}

\bibitem{RogWin10}
M.~Rognes and R.~Winther, \emph{Mixed finite element methods for linear
  viscoelasticity using weak symmetry}, Math. Models Methods Appl. Sci.
  \textbf{20} (2010), no.~6, 955--985.

\bibitem{Sten88}
Rolf Stenberg, \emph{A family of mixed finite elements for the elasticity
  problem}, Numer. Math. \textbf{53} (1988), no.~5, 513--538. \MR{954768
  (89h:65192)}

\end{thebibliography}
\bibliographystyle{amsplain}
\addcontentsline{toc}{chapter}{Bibliography}

\vspace{.125in}

\end{document}